\newtheorem{theorem}{Theorem}[section]
\newtheorem{proposition}[theorem]{Proposition}
\newtheorem{coro}[theorem]{Corollary}
\newtheorem{lemma}[theorem]{Lemma}
\newtheorem{rem}[theorem]{Remark}
\newtheorem{definition}[theorem]{Definition}
\newtheorem*{assum-carre}{Carr\'e du champ identity}
\newtheorem*{assum-LGamma}{Assumptions on $L$ and $\Gamma$}
\renewcommand{\epsilon}{\varepsilon}
\newcommand{\eps}{\epsilon}
\newcommand\C{\mathbb{C}}
\newcommand\N{\mathbb{N}}
\newcommand\R{\mathbb{R}}
\newcommand\calS{\mathcal{S}}
\newcommand{\scrC}{\ensuremath{\mathscr{C}}}
\newcommand{\dive}{\textrm{div}}
\DeclareMathOperator{\loc}{loc}
\newcommand{\Eins}{\ensuremath{\mathds{1}}}
\def\Xint#1{\mathchoice
   {\XXint\displaystyle\textstyle{#1}}%
   {\XXint\textstyle\scriptstyle{#1}}%
   {\XXint\scriptstyle\scriptscriptstyle{#1}}%
   {\XXint\scriptscriptstyle\scriptscriptstyle{#1}}%
   \!\int}
\def\XXint#1#2#3{{\setbox0=\hbox{$#1{#2#3}{\int}$}
     \vcenter{\hbox{$#2#3$}}\kern-.5\wd0}}
\def\aver#1{\Xint-_{#1}}
\newcommand\restr[2]{{
  \left.\kern-\nulldelimiterspace 
  #1 
  \vphantom{\big|} 
  \right|_{#2} 
  }}
\numberwithin{theorem}{section}
\numberwithin{equation}{section}
\definecolor{gr}{rgb}   {0.,   0.8,   0. } 
\definecolor{bl}{rgb}   {0.,   0.5,   1. } 
\definecolor{mg}{rgb}   {0.7,  0.,    0.7}
\title[Sobolev algebras]{Sobolev algebras through a ``carr\'e du champ'' identity}
\author{Fr\'ed\'eric Bernicot}
\address{Fr\'ed\'eric Bernicot, CNRS - Universit\'e de Nantes, Laboratoire Jean Leray, 2 rue de la Houssini\`ere, 44322 Nantes cedex 3.  France}
\email{frederic.bernicot@univ-nantes.fr}
\author{Dorothee Frey}
\address{Dorothee Frey, Delft Institute of Applied Mathematics, Delft University of Technology, P.O. Box 5031, 2600 GA Delft, The Netherlands}
\email{d.frey@tudelft.nl}
\thanks{FB's research was supported by the ERC project FAnFArE no. 637510 and
by the ANR project HAB no. ANR-12-BS01-0013}
\date{\today}
\let\oldtocsection=\tocsection
\let\oldtocsubsection=\tocsubsection
\let\oldtocsubsubsection=\tocsubsubsection
\renewcommand{\tocsection}[2]{\hspace{0em}\oldtocsection{#1}{#2}}
\renewcommand{\tocsubsection}[2]{\hspace{1em}\oldtocsubsection{#1}{#2}}
\renewcommand{\tocsubsubsection}[2]{\hspace{2em}\oldtocsubsubsection{#1}{#2}}
\begin{document}
\begin{abstract} We consider abstract Sobolev spaces of Bessel-type associated with an operator. In this work, we pursue the study of algebra properties of such functional spaces through the corresponding semigroup. As a follow-up of \cite{BCF2}, we show that under the extra property of a ``carr\'e du champ identity'', this algebra property holds in a wider range than previously shown. 
\end{abstract}

\maketitle


\section{Introduction}

\subsection{Setting}
\label{sec11}

Let $(M,d)$ be a locally compact separable metric space, equipped with a Borel measure $\mu$, finite on compact sets and strictly positive on any non-empty open set. For $\Omega$ a measurable subset of $M$, we shall  denote $\mu\left(\Omega\right)$ by $\left|\Omega\right|$.  For all $x \in M$ and all $r>0$, denote by $B(x,r)$ the open ball for the metric $d$ with centre $x$ and radius $r$, and  by $V(x,r)$ its measure $|B(x,r)|$.  For a ball $B$ of radius $r$ and a real $\lambda>0$, denote by $\lambda B$   the ball concentric  with $B$ and with radius $\lambda r$. We shall sometimes denote by $r(B)$ the radius of a ball $B$. We will use $u\lesssim v$ to say that there exists a constant $C$ (independent of the important parameters) such that $u\leq Cv$, and $u\simeq v$ to say that $u\lesssim v$ and $v\lesssim u$. Moreover, for $\Omega\subset M$ a subset of finite and non-vanishing measure and $f\in L^1_{loc}(M,\mu)$, $\aver{\Omega} f \, d\mu=\frac{1}{|\Omega|} \int f \, d\mu$ denotes the average of $f$ on $\Omega$. 

From now on, we assume that $(M,d,\mu)$ is a doubling metric measure space, which means that the measure $\mu$ satisfies the doubling property, that is
  \begin{equation}\label{d}\tag{$V\!D$}
     V(x,2r)\lesssim  V(x,r),\quad \forall~x \in M,~r > 0.
    \end{equation}
As a consequence, there exists  $\nu>0$  such that
     \begin{equation*}\label{dnu}\tag{$V\!D_\nu$}
      V(x,r)\lesssim \left(\frac{r}{s}\right)^{\nu} V(x,s),\quad \forall~r \ge s>0,~ x \in M.
    \end{equation*}

We then consider an unbounded operator $L$ on $L^2(M,\mu)$ as well as an 'abstract' notion of gradient operator $\Gamma$ under the following assumptions:

\begin{assum-LGamma}
\begin{itemize}
\item 
Assume that  $L$ is an injective, $\omega$-accretive operator with dense domain $\mathcal D \subset L^2(M,\mu)$, where $0 \leq \omega < \pi/2$.
Assume that there exists a bilinear operator $\Gamma$, with domain ${\mathcal F}^2$ for some subset ${\mathcal F}$ of $L^2(M,\mu)$, with $\mathcal{D} \subset \mathcal{F}$. 
\item For every $f\in {\mathcal F}$, we set $\Gamma(f):=|\Gamma(f,f)|^{1/2}$ and assume that $\Gamma$ satisfies the inequality
\begin{equation} \left|\Gamma(f,g)\right|\leq \Gamma(f) \Gamma(g), \qquad \forall f,g, \in \mathcal{F}.
\label{eq:gamma}
\end{equation}
Moreover, assume that 
 \begin{equation}\tag{$R_2$}
\| \Gamma f \|_2 \lesssim \|L^{1/2} f\|_2, \qquad \forall f\in {\mathcal D}.
 \label{R2}
\end{equation}
\item Assume that the semigroup $(e^{-tL})_{t>0}$ admits a kernel representation with a kernel $p_t$ satisfying the upper Gaussian pointwise estimates
\begin{equation}\tag{$U\!E$}
\left|p_{t}(x,y)\right|\lesssim
\frac{1}{V(x,\sqrt{t})}\exp
\left(-\frac{d^{2}(x,y)}{Ct}\right), \quad \forall~t>0,\, \mbox{a.e. }x,y\in
 M.\label{UE}
\end{equation}

\item Assume that the semigroup $(e^{-tL})_{t>0}$ and its gradient satisfy $L^2$ Davies-Gaffney estimates, which means that 
for every $r>0$ and all balls $B_1$,$B_2$ of radius $r$
\begin{equation} \tag{$DG$}
\| e^{-r^2L} \|_{L^2(B_1) \to L^2(B_2)} + \| r\Gamma e^{-r^2L} \|_{L^2(B_1) \to L^2(B_2)} \lesssim e^{-c \frac{d^2(B_1,B_2)}{r^2}}.
\label{eq:DG}
\end{equation}
\end{itemize}
\end{assum-LGamma}

By our assumptions, $(e^{-tL})_{t>0}$ is bounded analytic on $L^p(M,\mu)$ for $p \in (1,\infty)$ and uniformly bounded on $L^p(M,\mu)$ for $p \in [1,\infty]$, see \cite[Corollary 1.5]{BK2}. 
Note that \eqref{eq:DG} for the semigroup is a consequence of \eqref{UE}. By analyticity of the semigroup, the property \eqref{UE}, and thus also \eqref{eq:DG}, extends to the collections $((tL)^ne^{-tL})_{t>0}$ for every integer $n\geq 0$. The operator $\Gamma$ is a sublinear operator, acting like the length of the gradient on a Riemannian manifold. 

We also assume that $\Gamma$ and $L$ are related by a weak version of a ``carr\'e du champ identity'':

\begin{assum-carre}
 Assume that $\Gamma$ and $L$ satisfy the following: for every $t>0$
 and all functions $f,g\in L^\infty(M,\mu) \cap {\mathcal D}$ 
\begin{equation}
		e^{-tL}L(fg) = e^{-tL}\big[Lf \cdot g\big] +e^{-tL}\big[f \cdot Lg\big] - 2 e^{-tL} \Gamma(f,g).
\label{carre}
\end{equation}
This equality can be viewed in $L^2_{\loc}(M,\mu)$, since for  functions $f,g$ chosen as above, we know that  $\Gamma(f,g) \in L^1(M,\mu)$ and so the LHS and RHS are both locally in $L^2(M,\mu)$ due to \eqref{UE}.
\end{assum-carre}

\begin{rem}
\begin{itemize}
\item Note that the full carr\'e du champ identity, which is
\begin{equation}
		L(fg) = Lf \cdot g +f \cdot Lg - 2\Gamma(f,g),
\label{eq:strong}
\end{equation}
is stronger than the previous assumption. It is not clear on which set of functions such an identity may be assumed. 

\item Let us emphasise that the proofs developed in the next sections do not really require the exact identity \eqref{carre}. It would be sufficient to only assume the following inequality: for every $t>0$
 and all functions $f,g\in L^\infty(M,\mu) \cap {\mathcal D}$
\begin{equation}
		\left|e^{-tL}L(fg) - \left(e^{-tL}\big[Lf \cdot g\big] +e^{-tL}\big[f \cdot Lg\big]\right)\right| \lesssim \left|e^{-tL} \Gamma(f,g) \right|.
\label{carre-w}
\end{equation}
\end{itemize}
\end{rem}

We will assume the above throughout the paper. We abbreviate the setting with $(M,\mu,\Gamma,L)$.

\subsection{The algebra property}

Following up on \cite{BCF2}, we aim to prove that the (Bessel-type) Sobolev spaces satisfy an algebra property under our assumptions. Such property is very well understood in the Euclidean space and goes back to initial works by Strichartz \cite{St}, Kato and Ponce \cite{KP}, and then Coifman and Meyer \cite{cm,Meyer} using the paraproduct decomopsition. We refer the reader to \cite{BCF2} and references therein for a more complete review of the literature on this topic.
This algebra property and the corresponding Leibniz rule is crucial in order to study nonlinear PDEs.

In this current work, we are going to describe how the ``carr\'e du champ'' property allows us to improve the main results of \cite{BCF2}. Indeed, the carr\'e du champ identity combined with \eqref{eq:gamma} encodes a kind of algebra property at the order of regularity $1$, since the operator $\Gamma$ (resp. $L$) is implicitly an operator of order $1$ (resp. $2$).

\medskip

Let us first give a rigorous sense to what we mean by the Algebra property for Sobolev spaces. We follow the approach of \cite{BCF2}. Denote by $\mathcal{C}_0(M)$ the space of continuous functions on $M$ which vanish at infinity, and $\mathfrak{C}:=\mathcal{C}_0(M) \cap \mathcal{F}$. 
We define $\dot L^p_\alpha(M,L,\mu)\cap L^\infty(M,\mu)$ as the completion of
$$  \left\{f \in \mathfrak{C},\ L^{\alpha / 2}f\in L^p(M,\mu) \right\} $$ with respect to the norm  $\left\|L^{\alpha/2}f\right\|_p+\left\|f\right\|_\infty$. We denote in the sequel $\|L^{\alpha/2} f\|_p$ by $\|f\|_{p,\alpha}$.

Let us recall our definition of the algebra property $A(p,\alpha)$ from \cite[Definition 1.1]{BCF2}.

\begin{definition} For $\alpha>0$ and $p\in(1,\infty)$ we say that property $A(p,\alpha)$ holds if: 
\begin{itemize}
\item the space $\dot{L}^p_{\alpha}(M,L,\mu) \cap L^{\infty}(M,\mu)$ is an algebra for the pointwise product;
\item and the Leibniz rule inequality is valid:
$$ \| fg \|_{p,\alpha} \lesssim \|f\|_{p,\alpha} \|g\|_{\infty} +  \|f\|_{\infty} \|g\|_{p,\alpha}, \quad \forall\,f,g\in \dot{L}^p_{\alpha}(M,L,\mu)\cap L^\infty(M,\mu).$$
\end{itemize}
\end{definition}

\subsection{Main result}

For $p \in [1,\infty]$, we say that the semigroup satisfies gradient bounds \eqref{Gp} if 
\begin{equation} \label{Gp}
\sup_{t>0} \|\sqrt{t}\Gamma e^{-tL} \|_{p\to p} <\infty \tag{$G_p$}.
\end{equation}
Let us observe that by \eqref{R2} and \eqref{UE}, it is classical that our previous assumptions already imply \eqref{Gp} for $p\in(1,2]$.

Our main result reads as follows:

\begin{theorem} \label{thm:main}
 Let $(M,\mu,\Gamma,L)$ as in Subsection \ref{sec11} with a homogeneous dimension $\nu>2$.  Assume in addition 
  $(G_{p_0})$ for some $p_0 \in [2,\nu)$. Then 
 $A(p,\alpha)$ holds for every $p \in (1,p_0)$ with  $\alpha \in (0,1)$, and for every $p \in (p_0,\infty)$ with $0<\alpha<\frac{p_0}{p}$.
\end{theorem}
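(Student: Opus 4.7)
The plan is to combine a Calder\'on-type reproducing formula for $L^{\alpha/2}$ with the weak carr\'e du champ identity \eqref{carre} to split $L^{\alpha/2}(fg)$ into three pieces: two ``paraproduct'' terms that place all the derivatives on a single factor, and one ``diagonal'' term expressed through $\Gamma(f,g)$. Each piece is then bounded in $L^p$ separately.

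Starting from the functional-calculus identity, valid for $\alpha\in(0,2)$ with $c_\alpha = 1/\Gamma(1-\alpha/2)$,
\begin{equation*}
L^{\alpha/2}(fg) = c_\alpha\int_0^\infty t^{-\alpha/2}\, e^{-tL}L(fg)\,dt,
\end{equation*}
and substituting \eqref{carre} inside the integrand, one obtains
\begin{equation*}
L^{\alpha/2}(fg) = T_1(f,g) + T_2(f,g) - 2\,L^{\alpha/2-1}\Gamma(f,g),
\end{equation*}
with
\begin{equation*}
T_1(f,g) = c_\alpha\!\int_0^\infty\! t^{-\alpha/2}\,e^{-tL}[Lf\cdot g]\,dt, \quad T_2(f,g) = c_\alpha\!\int_0^\infty\! t^{-\alpha/2}\,e^{-tL}[f\cdot Lg]\,dt.
\end{equation*}

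For the paraproducts $T_1$ and $T_2$ I would follow \cite{BCF2}: split $e^{-tL}=e^{-tL/2}e^{-tL/2}$, use the off-diagonal bounds \eqref{eq:DG} to localize, and redistribute the fractional power of $L$ from $Lf$ into $L^{\alpha/2}f$ by a Hardy-type inequality in the variable $t$, obtaining $\|T_1\|_p \lesssim \|f\|_{p,\alpha}\|g\|_\infty$ and $\|T_2\|_p \lesssim \|f\|_\infty\|g\|_{p,\alpha}$ on the full range $p\in(1,\infty)$, $\alpha\in(0,1)$. This step relies only on \eqref{UE}, \eqref{eq:DG} and \eqref{R2} and does not appeal to \eqref{Gp}.

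The crucial new step is the control of the diagonal term $L^{\alpha/2-1}\Gamma(f,g)$, which encodes the ``regularity-one'' nature of $\Gamma$. Using \eqref{eq:gamma} to dominate $|\Gamma(f,g)|\leq \Gamma(f)\Gamma(g)$ pointwise, I would decompose $\Gamma(f)$ dyadically as $\Gamma f = \Gamma e^{-\sigma L}f + \Gamma(I-e^{-\sigma L})f$: the first summand is estimated in $L^{p_0}$ by $\sigma^{-1/2}\|f\|_{p_0}$ via \eqref{Gp}, while the second is absorbed by $L^{\alpha/2}f$ through a subordination formula. Symmetrically on $g$, one exploits $g\in L^\infty$ together with off-diagonal bounds on $\sqrt\sigma\,\Gamma e^{-\sigma L}$ inherited from \eqref{eq:DG} and Sobolev-type embeddings coming from \eqref{UE} with dimension $\nu>2$. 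Balancing the scales $\sigma$ and $t$ by a stopping-time argument closes the estimate
\begin{equation*}
\|L^{\alpha/2-1}\Gamma(f,g)\|_p \lesssim \|f\|_{p,\alpha}\|g\|_\infty + \|f\|_\infty\|g\|_{p,\alpha}.
\end{equation*}

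The main obstacle is precisely this scale balance, and it is what determines the admissible $(p,\alpha)$. For $p\in(1,p_0)$, \eqref{Gp} at $p$ itself follows by interpolating \eqref{Gp} with its $p=2$ case (a consequence of \eqref{R2} and analyticity of the semigroup), so no loss occurs and the full range $\alpha\in(0,1)$ is reached. For $p>p_0$, however, only the $L^{p_0}$-bound on $\Gamma$ is available and must be converted to $L^p$ through a Sobolev embedding based on \eqref{UE}, costing $(\tfrac{1}{p_0}-\tfrac{1}{p})\nu$ derivatives; balancing this loss against the $t$-integral weight $t^{-\alpha/2}$ is what forces $\alpha<p_0/p$. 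A standard density argument on $\mathfrak{C}$ together with the continuity of the pointwise product on $L^\infty$ then upgrades the pointwise Leibniz estimate to the algebra property $A(p,\alpha)$.
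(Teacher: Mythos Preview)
Your decomposition has a structural gap: the terms $T_1$, $T_2$ and $L^{\alpha/2-1}\Gamma(f,g)$ are in general not separately well-defined, let alone bounded. Observe that $T_1 = L^{\alpha/2-1}(Lf\cdot g)$. Although $Lf$ lies in the range of $L$, the product $Lf\cdot g$ need not: for a generic $g\in L^\infty$ one can have $\int Lf\cdot g\,d\mu\neq 0$, and then the negative fractional power $L^{\alpha/2-1}$ (a Riesz potential of order $2-\alpha>1$) produces a function that fails to lie in $L^p$. Concretely, in $\R^n$ with $L=-\Delta$, if $Lf\cdot g$ has nonzero mean then $\widehat{T_1}(\xi)\sim|\xi|^{-(2-\alpha)}$ near the origin, so $T_1\notin L^p$ for large $p$ (and in dimension one $T_1$ is infinite whenever $\alpha<1$). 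Equivalently, your integral $\int_0^\infty t^{-\alpha/2}e^{-tL}[Lf\cdot g]\,dt$ diverges at $t=\infty$. The ``Hardy-type redistribution from $Lf$ to $L^{\alpha/2}f$'' you invoke cannot be carried out because $Lf$ carries no $t$-dependence to play against. The paper avoids this by first decomposing the \emph{product} $fg$ into two paraproducts $\Pi_g(f),\Pi_f(g)$ and a resonant piece $\Pi(f,g)$, all built from $P_t$ and $Q_t$, and applies \eqref{carre} only inside the resonant piece, where both factors $P_tf$, $P_tg$ are already smoothed at scale $t$; this scale localisation is exactly what makes each resulting term separately bounded.

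Your mechanism for the range $p>p_0$ is also not the right one. Converting an $L^{p_0}$ gradient bound to $L^p$ through a Sobolev embedding costs $\nu(\tfrac{1}{p_0}-\tfrac{1}{p})$ derivatives and yields the constraint $\alpha<1-\nu(\tfrac{1}{p_0}-\tfrac{1}{p})$, which is precisely the range already obtained in \cite{BCF2} and is strictly smaller than $\alpha<p_0/p$ when $p_0<\nu$. The improved range comes instead from Stein's complex interpolation: the paper shows that the $\Gamma$-resonant operator $\Pi_\Gamma(\,\cdot\,,g)$ is bounded on $L^q$ for every $q\in(2,\infty)$ (using only \eqref{UE} and \eqref{eq:DG}, without $(G_{p_0})$) and bounded on $\dot L^{p_0}_\beta$ for all $\beta\in(0,1)$ (using $(G_{p_0})$), and then interpolates the analytic family $z\mapsto L^{z/2}\Pi_\Gamma(L^{-z/2}\,\cdot\,,g)$ between $\Re z=0$ and $\Re z=\beta$, letting $\beta\uparrow 1$ and $q\to\infty$.
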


The condition $p_0<\nu$ is not relevant and not used, but for $p_0>\nu$ the result was already obtained in \cite{BCF2} in a more general framework. That is why we restrict our attention here to the range $2\leq p_0<\nu$.

We use a slightly different decomposition of the product than in \cite{BCF2}. Indeed in \cite{BCF2}, the product of two functions was decomposed into two paraproducts. Here, we decompose it into three terms (two paraproducts and a 'resonant part'). The two paraproducts are completely uncritical, whereas the third one carries the most subtle information encoded in the resonances. The carr\'e du champ identity now allows us to handle this third part in a better way. This allows to improve over \cite{BCF2} in the case $p>2$.

\begin{proof} The theorem will be proved in the following sections. The proof goes through the use of Stein's complex interpolation between the two endpoints $(\alpha,p)=(1,p_0)$ and $(\alpha,p)=(0,\infty)$. \\
The case $p\in (1,p_0)$ is obtained as the combination of the paraproduct decompositions \eqref{eq:dec1} and \eqref{eq:decomp-carre} with the boundedness results of Propositions \ref{prop:errorterms}, \ref{prop:error-2} and \ref{prop:sp-para}. The case $p\in [p_0,\infty)$ is shown by combining the paraproduct decompositions \eqref{eq:dec1} and \eqref{eq:decomp-carre} with the boundedness results of Propositions \ref{prop:errorterms} and \ref{prop:inter}.
\end{proof}

\subsection{Comparison to previous results and examples}
Let us compare this result with what we have previously obtained in \cite[Theorem 1.5]{BCF2}. First, let us mention that even if \cite{BCF2} was written in the setting of a Dirichlet form (which is a particular case of our current setting here), all of the results in \cite{BCF2} can be described in our present setting, without assuming the 'carr\'e du champ' identity, with identical proofs. The extra main property used in \cite{BCF2} (instead of \eqref{carre}) is the following inequality
\begin{equation} \left| \int Lf \cdot g \, d\mu \right| \lesssim \int \Gamma f \cdot \Gamma g\, d\mu \label{carre2}
\end{equation}
for all functions $f,g\in {\mathcal F}$.

\smallskip

Let us now compare our result with the one of \cite{BCF2}:
\begin{itemize}
\item The two approaches rely on the same framework given by a 'gradient' operator $\Gamma$ satisfying a Leibniz rule and a semigroup $(e^{-tL})_{t>0}$. The main difference is that \cite{BCF2} requires \eqref{carre2}, whereas here we assume \eqref{carre} or in fact the weaker version \eqref{carre-w}. We first observe that in the case of a self-adjoint and conservative operator $L$, then by integrating \eqref{carre-w} implies exactly \eqref{carre2}. So our current assumption is stronger than the one used in \cite{BCF2} and corresponds to a pointwise version; it is therefore natural that we are able to obtain a wider range of exponents. To be more precise, for $p>p_0$ we improve the range $\alpha \in (0, 1-\nu(\frac{1}{p_0}-\frac{1}{p}))$ (obtained in \cite{BCF2}) to $\alpha \in (0,\frac{p_0}{p})$.

\item Moreover, we only detail the proofs of \cite{BCF2} and of the current work in the setting where the semigroup is supposed to satisfy \eqref{UE}, which corresponds to pointwise (or $L^1$-$L^\infty$) local estimates. However, it is by now well-known that all the employed arguments can be extended to a more general framework where the semigroup is only assumed to have local $L^{p_-}$-$L^{p_+}$ estimates for some $p_-<2<p_+$. In such a situation the condition on the exponents $\alpha,p$ such that $A(\alpha,p)$ can be proved will depend on $p_-,p_+$. A careful examination reveals the following difference: in \cite{BCF2}, we make appear only one $\Gamma$ operator, evaluated on a product and then use a Leibniz property. In the current work, the ``carr\'e du champ'' identity \eqref{carre} makes appear the product of two $\Gamma$ operators. So combining the $\Gamma$ operator (on which we assume $L^{p_-}$-$L^{p_0}$ local estimates through $(G_{p_0})$) and the local $L^{p_-}$-$L^{p_+}$ estimates on the semigroup will then lead to more restrictions in the current setting than in \cite{BCF2}. Thus also from this point of view it is natural that we can obtain a wider range for the Sobolev algebra property, because of our stronger assumption.
\end{itemize}

As a conclusion of the comparison: our previous work \cite{BCF2} and this current one are both interesting in themselves and each of them brings results in its proper framework. If one can fit into the current framework, then it is better to follow the current approach, where we develop a simpler proof for the range $(1,p_0]$ and a wider range for $p>p_0$ by taking advantage of the carr\'e du champ identity. However \cite{BCF2} explains how we can still prove the Algebra property in a more general setting, yet with a smaller range.

\medskip

Let us now describe some examples where the extra assumption in terms of 'carr\'e du champ' identity is satisfied:
\begin{itemize}
\item The Dirichlet form setting (as detailed in \cite{BCF2}) with a carr\'e du champ. In such a case, the carr\'e du champ operator $\Gamma$ satisfies the 'strong' (pointwise) identity \eqref{eq:strong}, as well as \eqref{eq:gamma}.

\item In the Euclidean setting $M={\mathbb R}^n$ (or more generally in a doubling Riemannian manifold), consider $A = A(x)$ a complex matrix - valued function with bounded measurable coefficients, satisfying the ellipticity (or accretivity) condition
\begin{equation} \lambda |\xi|^ 2 \leq  \Re \langle A(x) \xi, \xi \rangle \qquad \textrm{and} \qquad  |\langle A(x)\xi , \zeta\rangle | \leq \Lambda |\xi||\zeta|, \label{eq:ell} \end{equation}
for some constants $\lambda,\Lambda>0$ and every $x\in {\mathbb R}^n$, $\xi,\zeta\in {\mathbb R}^n$. 

For such a complex matrix-valued function $A$, we may define a second order divergence form operator
$$ L=L_A f :=- \dive (A\nabla f),$$
which we first interpret in the sense of maximal accretive operators via a
sesquilinear form. That is, ${\mathcal D}(L)$ is the largest subspace contained in $W^{1,2}:={\mathcal D}(\nabla)$
for which 
$$ \left| \int_M \langle A \nabla f, \nabla g\rangle \, d\mu \right| \leq C \|g\|_2 \qquad \forall g \in W^{1,2},$$ 
and we define $Lf$ by
$$ \langle Lf, g \rangle = \int_M \langle A \nabla f, \nabla g\rangle \, d\mu$$
for $f \in {\mathcal D}(L)$ and $g \in W^{1,2}$. Thus defined, $L=L_A$ is a maximal-accretive operator
on $L^2$ and ${\mathcal D}(L)$ is dense in $W^{1,2}$.

For such an operator we have the pointwise carr\'e du champ identity \eqref{eq:strong} with the operator
$$ \Gamma(f,g):= \Re \langle A\nabla f, \nabla g \rangle.$$
The ellipticity condition then implies \eqref{eq:gamma}.

\item In the case of a non-selfadjoint operator $L$, we can also consider the following example: in the Euclidean space, associated with a rather singular function $a$, consider the operator $L(f) = -\Delta(af)$. It is non-selfadjoint and non conservative, but some of arguments of \cite{BCF2} or those developed here can be used, if we can prove \eqref{UE} and \eqref{eq:DG}. We refer the reader to \cite{McN} (extended to a doubling setting in \cite{DO}), where it is proven that if the measurable function $a$ has an accretive real part, then the semigroup $e^{-tL^*}$ satisfies \eqref{UE} and by duality it is also true for $e^{-tL}$. Combining this with Riesz transform estimates in $L^2$ also gives $L^2$ Davies-Gaffney estimates \eqref{eq:DG} for the operator $L$.\\
For such an operator, it is  interesting to observe that assumption \eqref{carre2} (used for \cite{BCF2}) relies on a Lipschitz condition on $a$ although the assumption \eqref{carre-w} (used here) will require a $C^2$-condition on $a$. 
\end{itemize}

\section{Technical preliminaries} 

Let us give some notation and a few reminders about certain operators constructed from the functional calculus of $L$. We refer to \cite{BCF2} for more details. We first define {\it approximation operators}, which are the elementary objects to build a paraproduct associated with a semigroup. 

\begin{definition} \label{def:Qt-Pt}
Let $N \in \N$, $N>0$, and set $c_N=\int_0^{+\infty} s^{N} e^{-s} \,\frac{ds}{s}$.
 For $t>0$, 
define
\begin{equation} \label{def:Qt}
	Q_t^{(N)}:=c_N^{-1}(tL)^{N} e^{-tL}
\end{equation}
and 
\begin{equation} \label{def:Pt}
	P_t^{(N)}:=\phi_N(tL),
\end{equation}
with $\phi_N(x):= c_N^{-1}\int_x^{+\infty} s^{N} e^{-s} \,\frac{ds}{s}$,  $x\ge 0$.
\end{definition}

Let us define some suitable sets of test functions. Let us recall that $\mathfrak{C}:=\mathcal{C}_0(M) \cap \mathcal{F}$. 

\begin{definition} \label{def:calS} For $p\in(1,{+\infty})$, we define the set of test functions 
\begin{align*}
\calS^p & =\calS^p(M,\mathcal{L}):=  \{f\in \mathfrak{C} \cap L^p: \ \exists\, g,h\in L^2 \cap L^p ,\ f=\mathcal{L}g \textrm{ and } h=\mathcal{L}f \}, 
\end{align*}
and
$$ \calS = \cup_{p\in(1,{+\infty})} \calS^p.$$
\end{definition}

We recall from \cite[Proposition 2.13]{BCF2} that \eqref{UE} implies square function estimates for $Q_t^{(N)}$ in $L^p$.
\begin{lemma} \label{lem:verticalsqfct}  Let $p \in (1,\infty)$,  $N \in \N$, $N>0$, and $\alpha>0$.
 Under \eqref{UE}, one has
$$
		\left\|\left(\int_0^\infty |(tL)^\alpha P_t^{(N)} f|^2\,\frac{dt}{t}\right)^{1/2} \right\|_p \lesssim \|f\|_p
$$
for all $f \in L^p(M,\mu)$. 
\end{lemma}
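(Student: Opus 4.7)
The plan is to view the square function as associated to the spectral multiplier $\psi(z) := z^\alpha \phi_N(z)$ via $(tL)^\alpha P_t^{(N)} = \psi(tL)$, and to verify that this $\psi$ satisfies the decay properties needed to run the standard Littlewood--Paley / $H^\infty$ functional calculus machinery. A direct inspection gives $\phi_N(0) = 1$ (from $\int_0^\infty s^N e^{-s}\,ds/s = c_N$) and $|\phi_N(z)| \lesssim |z|^{N-1}e^{-\Re z}$ as $|z|\to\infty$ in any sector $\Sigma_\mu$ with $\mu < \pi/2$, so $|\psi(z)| \lesssim \min(|z|^\alpha, |z|^{N-1+\alpha}e^{-\Re z})$. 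In particular $\psi$ vanishes at both $0$ and $\infty$ inside a sector strictly larger than the one containing the spectrum of $L$, i.e.\ $\psi$ belongs to the decay class $\Psi(\Sigma_\mu)$ for $\mu\in(\omega,\pi/2)$.

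The $L^2$ square function bound then follows from the bounded $H^\infty(\Sigma_\mu)$ calculus of the $\omega$-accretive operator $L$ on $L^2$, which is implied by the analyticity of the semigroup (McIntosh's theorem). For the $L^p$ extension with $p\in(1,\infty)$, I would invoke the standard result that under Gaussian upper estimates \eqref{UE} and $\omega$-accretivity with $\omega<\pi/2$, $L$ admits a bounded $H^\infty(\Sigma_\mu)$ calculus on $L^p$ for every $\mu>\omega$ (Duong--McIntosh, Blunck--Kunstmann). Combined with the Cowling--Doust--McIntosh--Yagi theorem, a bounded $H^\infty$ calculus on $L^p$ automatically promotes every $\psi\in\Psi(\Sigma_\mu)$ to a bounded vertical square function on $L^p$, giving exactly the desired estimate.

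As an alternative, more elementary route that avoids invoking the full $H^\infty$ machinery, I would proceed by Schur-type reduction to the classical semigroup square function. Since $\phi_N(x) = c_N^{-1}\int_1^\infty (xu)^N e^{-xu}\,du/u$, the change of variables $s=tu$ yields the operator identity
\[
(tL)^\alpha P_t^{(N)} \;=\; c_N^{-1}\int_t^\infty \bigl(t/s\bigr)^\alpha (sL)^{N+\alpha} e^{-sL}\,\frac{ds}{s}.
\]
Applying Cauchy--Schwarz pointwise and using that $\int_t^\infty (t/s)^\alpha \,ds/s = \int_0^s(t/s)^\alpha\, dt/t = 1/\alpha$, Fubini's theorem gives the pointwise domination
\[
\int_0^\infty \bigl|(tL)^\alpha P_t^{(N)} f(x)\bigr|^2\,\frac{dt}{t} \;\lesssim\; \int_0^\infty \bigl|(sL)^{N+\alpha} e^{-sL} f(x)\bigr|^2\,\frac{ds}{s},
\]
so the claim reduces to the square function bound for $(sL)^{N+\alpha}e^{-sL}$. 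For integer $N+\alpha$ this is the classical estimate (originating in Stein's work and adapted to our doubling / Gaussian setting in the literature); for non-integer exponents it can be handled by one further iteration of the same Schur argument via the fractional-integral identity $L^{-\gamma} = \Gamma(\gamma)^{-1}\int_0^\infty u^{\gamma-1}e^{-uL}\,du$, reducing to an integer power of $L$ acting on the semigroup.

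The main obstacle in either route is identifying a non--trivial input coming from the non-self-adjointness of $L$: the $L^p$ bound (as opposed to the $L^2$ bound, which is pure spectral theory) requires either bounded $H^\infty$ calculus on $L^p$ or a direct vector-valued Calderón--Zygmund argument exploiting the Gaussian kernel bounds. Either of these is by now a standard consequence of \eqref{UE} plus $\omega$-accretivity with $\omega<\pi/2$, and I would simply quote it; all remaining steps are manipulations of functional calculus and Fubini.
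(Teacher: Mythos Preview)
The paper does not supply its own proof of this lemma; it simply recalls the statement from \cite[Proposition~2.13]{BCF2}. So there is no argument in the paper to compare your attempt against, and both of the routes you outline are legitimate ways to recover this known estimate.

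A couple of small remarks. In your first route, the sentence ``implied by the analyticity of the semigroup (McIntosh's theorem)'' is not quite accurate: analyticity (equivalently, sectoriality) alone does not yield a bounded $H^\infty$ calculus on $L^2$; what is being used is that an $\omega$-accretive operator with $\omega<\pi/2$ on a Hilbert space has bounded $H^\infty(\Sigma_\mu)$ calculus for $\mu>\omega$. This is harmless since you then invoke \eqref{UE} and the Duong--McIntosh/Blunck--Kunstmann theory for the $L^p$ case anyway, which covers $p=2$ as well.

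Your second route---writing $P_t^{(N)}$ as a superposition of $Q_s^{(N)}$ for $s\geq t$ and applying a Hardy-type inequality to reduce to the square function for $(sL)^\beta e^{-sL}$---is exactly the manoeuvre the paper does employ, one lemma later, in the proof of Lemma~\ref{lem:g-fct} for the gradient square function. So your ``elementary'' reduction is very much in the spirit of the surrounding text. The final step for non-integer $\beta=N+\alpha$ is slightly sketchy as written (the fractional-integral identity introduces a kernel with a singularity at the diagonal that is not directly of Schur type), but the estimate $\|(\int_0^\infty|(sL)^\beta e^{-sL}f|^2\,ds/s)^{1/2}\|_p\lesssim\|f\|_p$ for arbitrary $\beta>0$ is in any case a standard consequence of \eqref{UE}, so quoting it is entirely acceptable here.
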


A direct consequence of the above is the following orthogonality lemma. See \cite[Lemma 2.15]{BCF2} for a slightly less general version.

\begin{lemma}\label{lem:orthogonality} Let $p \in (1,\infty)$, $N \in \N$, $N>0$, and $\alpha>0$. Assume \eqref{UE}. 
Then
$$ \left\| \int_0^{+\infty} (tL)^\alpha P_t^{(N)} F_t \, \frac{dt}{t} \right\|_{p} \lesssim \left\| \left( \int_0^{+\infty} |F_t|^2 \, \frac{dt}{t}\right)^{1/ 2} \right\|_{p},$$
where $F_t(x):=F(t,x)$,  $F: (0,{+\infty})\times M\to\R$ is a measurable function such that the RHS has a meaning and is finite.
\end{lemma}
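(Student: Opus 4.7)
The plan is to prove this dual form of the square function estimate via $L^p$--$L^{p'}$ duality combined with Lemma \ref{lem:verticalsqfct} applied to the adjoint operator $L^*$. Denoting $T_t := (tL)^\alpha P_t^{(N)} = \psi(tL)$ with $\psi(x):=x^\alpha \phi_N(x)$, the function $\psi$ is real-valued, smooth on $[0,+\infty)$, and vanishes at both endpoints with the right rate so that it falls under the scope of Lemma \ref{lem:verticalsqfct}.

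First, I would pair the integral with a test function: for every $g \in L^{p'}(M,\mu)$ with $\|g\|_{p'}\leq 1$,
\[
\left\langle \int_0^{+\infty} T_t F_t \,\frac{dt}{t},\, g\right\rangle = \int_0^{+\infty} \int_M F_t(x)\,\overline{T_t^* g(x)}\,d\mu(x)\,\frac{dt}{t}.
\]
Applying the Cauchy--Schwarz inequality in the measure $\frac{dt}{t}$ pointwise in $x$, and then Hölder's inequality in $x$ with exponents $p$ and $p'$, this quantity is dominated by
\[
\left\| \left(\int_0^{+\infty} |F_t|^2\,\frac{dt}{t}\right)^{1/2} \right\|_p \cdot \left\| \left(\int_0^{+\infty} |T_t^* g|^2 \,\frac{dt}{t}\right)^{1/2} \right\|_{p'}.
\]
Taking the supremum over such $g$ and invoking $L^p$--$L^{p'}$ duality reduces the claim to the square function estimate
\[
\left\| \left(\int_0^{+\infty} |T_t^* g|^2 \,\frac{dt}{t}\right)^{1/2} \right\|_{p'} \lesssim \|g\|_{p'}.
\]

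Second, since $\psi$ is real-valued, $T_t^* = \psi(tL^*) = (tL^*)^\alpha P_t^{(N),*}$, where $P_t^{(N),*}$ is the analogue of $P_t^{(N)}$ with $L$ replaced by $L^*$. The kernel of $e^{-tL^*}$ is $\overline{p_t(y,x)}$, which satisfies the same bound as in \eqref{UE} (the bound is symmetric in $x,y$ up to the doubling property of $\mu$). Hence Lemma \ref{lem:verticalsqfct} applies verbatim to $L^*$ and delivers exactly the desired dual square function estimate, concluding the proof.

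The only mild obstacle is to justify the interchange of the $t$- and $x$-integrals: this is handled by first restricting to $F_t$ compactly supported in $t\in[\epsilon,\epsilon^{-1}]$ and taking values in $L^2\cap L^{p}$, where everything is absolutely convergent, and then extending by the density of such functions in the tent space $T^p_2$ together with the finiteness assumption on the RHS. All other steps are routine once the symmetry of \eqref{UE} under passage to the adjoint is noted.
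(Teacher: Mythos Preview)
Your argument is correct and is precisely the standard duality route that the paper implicitly has in mind: the paper does not spell out a proof but simply records the lemma as ``a direct consequence'' of Lemma~\ref{lem:verticalsqfct} (referring to \cite[Lemma~2.15]{BCF2}), and the way one makes that consequence explicit is exactly the pairing with $g\in L^{p'}$, Cauchy--Schwarz in $t$, H\"older in $x$, and the square function bound for $\psi(tL^*)$ that you wrote down. Your observation that $(UE)$ passes to $L^*$ via the kernel identity $p_t^*(x,y)=\overline{p_t(y,x)}$ together with doubling is the only point one has to check, and it is correct.
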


Under the additional assumption $(G_{p_0})$ for some $p_0>2$, one also has square function estimates involving $\Gamma$. 
\begin{lemma} \label{lem:g-fct} Let $N \in \N$, $N>0$, and $\alpha \in (0,1)$. Assume $(G_{p_0})$ for some $p_0 \in (2,\infty)$.Then for every $p \in (1,p_0)$,
$$
		\left\|\left(\int_0^\infty |\sqrt{t} \Gamma (tL)^{-\alpha/2} P_t^{(N)} f|^2 \, \frac{dt}{t}\right)^{1/2} \right\|_p \lesssim \|f\|_p
$$
for all $f \in L^p(M,\mu)$. 
\end{lemma}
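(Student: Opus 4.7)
The plan is to algebraically separate the ``gradient'' factor $\Gamma L^{-1/2}$ from a purely $L$-type square function, and then recover the estimate from Lemma~\ref{lem:verticalsqfct}. Using $\sqrt{t}\,(tL)^{-\alpha/2}=L^{-1/2}(tL)^{(1-\alpha)/2}$, the commutativity of the functional calculus of $L$, and the positive-homogeneity $\Gamma(cg)=|c|\Gamma(g)$, one gets the pointwise identity
$$
\sqrt{t}\,\Gamma(tL)^{-\alpha/2}P_{t}^{(N)}f \;=\; \Gamma\bigl[L^{-1/2}F_{t}\bigr],\qquad F_{t}:=(tL)^{(1-\alpha)/2}P_{t}^{(N)}f .
$$
Since $(1-\alpha)/2\in(0,1/2)$ is an admissible exponent in Lemma~\ref{lem:verticalsqfct}, one already has $\bigl\|(\int_0^\infty|F_t|^2\,dt/t)^{1/2}\bigr\|_p\lesssim\|f\|_p$ for every $p\in(1,\infty)$, so what remains is the vector-valued Riesz-type inequality
$$
\Bigl\|\Bigl(\int_0^\infty|\Gamma L^{-1/2}F_t|^2\,\tfrac{dt}{t}\Bigr)^{1/2}\Bigr\|_p \;\lesssim\; \Bigl\|\Bigl(\int_0^\infty|F_t|^2\,\tfrac{dt}{t}\Bigr)^{1/2}\Bigr\|_p .
$$

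At $p=2$ this is a one-line consequence of \eqref{R2}: $\|\Gamma L^{-1/2}g\|_2\lesssim\|g\|_2$, so that the left-hand side above is dominated by $\bigl(\int_0^\infty\|F_t\|_2^2\,dt/t\bigr)^{1/2}\lesssim\|f\|_2$. For a general $p\in(1,p_0)$ I would proceed in two stages. First, I would note that $(G_p)$ is available on the full range $(1,p_0]$: for $p\in(1,2]$ from \eqref{R2} combined with \eqref{UE} as recalled right after the statement of $(G_p)$, and for $p\in(2,p_0]$ by Stein's complex interpolation between $(G_2)$ and $(G_{p_0})$. Second, subordinating $L^{-1/2}=c\int_0^\infty e^{-sL}\,s^{-1/2}\,ds$ and pulling $\Gamma$ inside via the bilinear form and \eqref{eq:gamma}, the object $\Gamma L^{-1/2}F_t$ becomes a superposition of the $\sqrt{s}\,\Gamma e^{-sL}F_t$, for which $(G_p)$ and the $\Gamma$-part of \eqref{eq:DG} deliver the needed $L^2$-$L^p$ off-diagonal bounds. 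Running the Auscher-Blunck-Kunstmann extrapolation exactly as in \cite{BCF2} for the $\Gamma$-free analogue Lemma~\ref{lem:verticalsqfct} then upgrades this into the required $L^p(\ell^2(dt/t))$-bound, and combining with Lemma~\ref{lem:verticalsqfct} applied to $(F_t)_t$ closes the proof.

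The step I expect to be the main obstacle is precisely the passage from the scalar Riesz-type bound to its $L^p(\ell^2(dt/t))$-extension: since $\Gamma$ is only sublinear, the routine ``linear operator $+$ smooth kernel $\Rightarrow$ vector-valued'' argument does not apply off the shelf. I plan to get around it by linearising through duality against a sequence $(G_t)_t\in L^{p'}(\ell^2)$ and writing the resulting scalar pairing via the bilinear form $\Gamma(L^{-1/2}F_t,L^{-1/2}G_t)$; \eqref{eq:gamma} then reduces the matter to genuinely linear $L^2$ off-diagonal inequalities already available from \eqref{UE} and \eqref{eq:DG}. This bilinear detour is also what pins down the range $p<p_0$: beyond $p_0$ the input $(G_p)$ is no longer guaranteed and the extrapolation loses its off-diagonal input.
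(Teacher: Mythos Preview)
Your route and the paper's diverge sharply. The paper does not factor out the Riesz transform at all: it writes $P_t^{(N)}f=\int_t^\infty Q_s^{(N)}f\,\frac{ds}{s}$, pulls the sublinear $\Gamma$ inside the $s$-integral to get the pointwise bound
\[
\bigl|\sqrt{t}\,\Gamma(tL)^{-\alpha/2}P_t^{(N)}f\bigr|\le\int_t^\infty\Bigl(\tfrac{t}{s}\Bigr)^{\frac{1-\alpha}{2}}\bigl|\sqrt{s}\,\Gamma(sL)^{-\alpha/2}Q_s^{(N)}f\bigr|\,\tfrac{ds}{s},
\]
applies Hardy's inequality in $L^2(\tfrac{dt}{t})$ (this is where $\alpha<1$ enters), and is then reduced to the $Q_t$-version, which is exactly \cite[Proposition~2.14]{BCF2}. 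No vector-valued Riesz estimate is needed.

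Your approach, by contrast, hinges entirely on the $L^p(\ell^2)$-extension of $\Gamma L^{-1/2}$, and that step has a genuine gap. The ``bilinear detour'' you describe does not work as written: dualising the square function against $(G_t)\in L^{p'}(\ell^2)$ produces $\int\!\!\int \Gamma(L^{-1/2}F_t)\,G_t\,d\mu\,\tfrac{dt}{t}$ with an \emph{arbitrary} scalar $G_t$, and there is no mechanism to rewrite this as $\int\Gamma(L^{-1/2}F_t,L^{-1/2}G_t)$ --- the second slot is not of the form $L^{-1/2}(\cdot)$, so \eqref{eq:gamma} gives you nothing. More fundamentally, in the paper's abstract setting $\Gamma$ is only a sublinear form with no underlying linear gradient to tensorise, so the usual ``linear CZ operator $\Rightarrow$ $\ell^2$-extension'' machinery, as well as the Blunck--Kunstmann extrapolation you invoke, does not apply off the shelf. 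A vector-valued Riesz bound of this type may well be true under $(G_{p_0})$, but it is a separate result requiring its own proof (essentially a $\Gamma$-square-function estimate of the same strength as the lemma itself), and your sketch does not supply one. The paper sidesteps all of this by keeping the $t$-dependence inside the functional calculus and never isolating $\Gamma L^{-1/2}$.
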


\begin{proof}
The proof of \cite[Proposition 2.14]{BCF2} has to be adapted as follows. 
By writing 
$$
		P_t^{(N)} f = \int_t^\infty Q_s^{(N)} f \,\frac{ds}{s},
$$
one has the pointwise estimate 
\begin{align*}
	|\sqrt{t}\Gamma (tL)^{-\alpha/2} P_t^{(N)} f|
	\leq \int_t^\infty \left(\frac{t}{s}\right)^{\frac{1-\alpha}{2}}|\sqrt{s}\Gamma (sL)^{-\alpha/2} Q_s^{(N)} f| \,\frac{ds}{s}.
\end{align*}
Since $\alpha \in (0,1)$, Hardy's inequality yields
\begin{align*}
	\left(\int_0^\infty |\sqrt{t}\Gamma (tL)^{-\alpha/2} P_t^{(N)} f|^2 \,\frac{dt}{t}\right)^{1/2} 
	\lesssim \left(\int_0^\infty |\sqrt{t}\Gamma (tL)^{-\alpha/2} Q_t^{(N)} f|^2 \,\frac{dt}{t}\right)^{1/2}. 
\end{align*}
Having this pointwise inequality, one can proceed as before in \cite[Proposition 2.14]{BCF2}. 
\end{proof}

\section{Main result}

From now on, fix $D \in \N$ in the definition of $Q_t^{(D)}$ and $P_t^{(D)}$ sufficiently large ($D >4\nu$ will suffice), and write $Q_t:= Q_t^{(D)}$ and  $P_t:=P_t^{(D)}$. \\

We define paraproducts associated with the underlying operator $L$. Note however that the definitions differ from those in \cite{BCF2}. \\

For $g \in L^\infty(M,\mu)$, we define the paraproduct $\Pi_g$ on $\mathcal{S}$ by
$$
	\Pi_g^{(D)}(f) = \Pi_g(f) := \int_0^\infty P_t (Q_tf \cdot P_tg) \,\frac{dt}{t},  \qquad f \in \mathcal{S}.
$$

For every $p \in (1,\infty)$ and every $f \in \mathcal{S}^p$, the integral is absolutely convergent in $L^p(M,\mu)$. We refer the reader to \cite[Section 3]{BCF2} for the details, noting that $(P_t)_{t>0}$ is bounded uniformly in $L^p(M,\mu)$. \\

We define the resonant term $\Pi$ on  $\mathcal{S}$ by
$$
	\Pi^{(D)}(f,g)=	\Pi(f,g):=\int_0^\infty Q_t (P_tf \cdot P_tg) \,\frac{dt}{t},  \qquad f,g \in \mathcal{S}.
$$

We discuss the question of absolute convergence of the integral in $\Pi(f,g)$ after Proposition \ref{prop:errorterms}. 

\begin{lemma}[Product decomposition]
For every $p \in (1,\infty)$ and every $f,g \in \mathcal{S}^p$, we have the product decomposition
\begin{equation}
		fg= \Pi(f,g) + \Pi_g(f) + \Pi_f(g)   \qquad \text{in}\ L^p(M,\mu).
\label{eq:dec1}
\end{equation}
\end{lemma}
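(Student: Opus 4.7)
The strategy is to realize $fg$ as a telescoping integral in the parameter $t$ and then distribute the $t$-derivative via Leibniz. The key calculus identity is
$$ t\,\partial_t P_t \;=\; -Q_t,$$
which follows directly from Definition \ref{def:Qt-Pt}: since $\phi_D'(x)=-c_D^{-1}x^{D-1}e^{-x}$, one has $t\,\partial_t \phi_D(tL)=-c_D^{-1}(tL)^D e^{-tL}=-Q_t$. Combined with $\phi_D(0)=1$ and $\phi_D(\infty)=0$, and with the injectivity of $L$, this gives $P_t \to I$ strongly on $L^p(M,\mu)$ as $t\to 0^+$ and $P_t\to 0$ strongly on a sufficiently dense subspace as $t\to\infty$.

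Granting these endpoint behaviors, for $f,g\in\mathcal{S}^p$ the fundamental theorem of calculus yields
$$ fg \;=\; -\int_0^\infty \frac{d}{dt}\bigl[\,P_t\bigl(P_tf\cdot P_tg\bigr)\,\bigr]\,dt.$$
Applying the product rule to the three factors inside the bracket and substituting $\partial_t P_t=-Q_t/t$ rewrites the integrand as
$$ -\frac{d}{dt}\bigl[P_t(P_tf\cdot P_tg)\bigr] \;=\; \frac{1}{t}\Bigl[\,Q_t(P_tf\cdot P_tg) \;+\; P_t(Q_tf\cdot P_tg) \;+\; P_t(P_tf\cdot Q_tg)\,\Bigr],$$
whose three $\int_0^\infty \cdot\,dt$ contributions match respectively the definitions of $\Pi(f,g)$, $\Pi_g(f)$ and $\Pi_f(g)$. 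This directly produces \eqref{eq:dec1}.

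The actual work is to justify this manipulation rigorously in $L^p(M,\mu)$. For the two paraproduct integrals, the uniform $L^p$-boundedness of $(P_t)$ together with the uniform $L^\infty$-bound $\|P_t g\|_\infty\lesssim \|g\|_\infty$ reduces matters to controlling $\int_0^\infty Q_t f \cdot \phi_t\,\frac{dt}{t}$ in $L^p$; the vertical square function bound of Lemma \ref{lem:verticalsqfct} combined with Lemma \ref{lem:orthogonality} then provides absolute convergence, along the lines of \cite[Section 3]{BCF2}. The resonant integral is handled separately (after Proposition \ref{prop:errorterms}). The delicate boundary term is at $t\to\infty$: since $f\in\mathcal{S}^p$ one has $f=Lh$ with $h\in L^2\cap L^p$, so $P_t f = t^{-1}(tL)\phi_D(tL)h$ and the $H^\infty$-functional calculus of $L$ delivers a decay $\|P_tf\|_p \lesssim t^{-1}\|h\|_p$; combined with $\|P_tg\|_\infty\lesssim\|g\|_\infty$ and boundedness of $P_t$ on $L^p$, this forces $P_t(P_tf\cdot P_tg)\to 0$ in $L^p$. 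The limit at $t\to 0^+$ follows from strong continuity of $(P_t)$ on $L^p$ applied to $fg\in L^p$, closing the argument.
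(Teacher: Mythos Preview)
Your argument is correct and follows essentially the same route as the paper: both write $fg=-\int_0^\infty \partial_t\bigl[P_t(P_tf\cdot P_tg)\bigr]\,dt$ using $Q_t=-t\,\partial_tP_t$, distribute the derivative by Leibniz, and identify the three pieces with $\Pi(f,g)$, $\Pi_g(f)$, $\Pi_f(g)$; the paper simply outsources the endpoint limits to \cite[Proposition~2.11, Lemma~3.1]{BCF2}, whereas you spell out the $t\to\infty$ decay via $f=Lh$. One minor imprecision: the $t\to0^+$ limit is not just ``strong continuity of $P_t$ applied to $fg$'', since you need $P_t(P_tf\cdot P_tg)\to fg$, which also requires $P_tf\cdot P_tg\to fg$ in $L^p$; this follows easily from $f,g\in L^p\cap L^\infty$ and strong continuity (write $P_tf\cdot P_tg-fg=(P_tf-f)P_tg+f(P_tg-g)$), and is exactly what the paper cites from \cite{BCF2}.
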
 

\begin{proof}
Since $\mathcal{S}^p \subseteq L^\infty(M,\mu)$, we have $f \cdot g, \, P_t f \cdot P_t g \in L^p(M,\mu)$.
We recall from \cite[Proposition 2.11, Lemma 3.1]{BCF2} that in the $L^p$ sense, $f \cdot g  = \lim_{t \to 0} P_t f \cdot P_t g$ and $0 = \lim_{t \to \infty} P_t f \cdot P_t g$, where the latter makes use of our assumption $N(L)=\{0\}$. The same arguments then also imply that
\begin{align*}
		f \cdot g & = \lim_{t \to 0} P_t(P_t f \cdot P_t g),\\
		0 &= \lim_{t \to \infty} P_t(P_t f \cdot P_t g)
\end{align*}
in the $L^p$ sense.
Since $P_t$ and $Q_t$ are defined such that $Q_t=-t\partial_t P_t$, we obtain
\begin{align*}
		fg & = \lim_{t\to 0} P_t(P_tf \cdot P_tg) - \lim_{t\to \infty} P_t(P_tf \cdot P_tg)
		= - \int_0^\infty \partial_t (P_t(P_tf \cdot P_tg))\, dt \\
		& = \int_0^\infty Q_t (P_tf \cdot P_tg) \,\frac{dt}{t}
		+ \int_0^\infty P_t (Q_tf \cdot P_tg) \,\frac{dt}{t}
		+ \int_0^\infty P_t (P_tf \cdot Q_tg) \,\frac{dt}{t},
\end{align*} 
which is the stated decomposition.
\end{proof}

The critical term in the product decomposition is the resonant term $\Pi(f,g)$. We have shown already in \cite[Proposition 3.3]{BCF2} that the paraproduct $\Pi_g(f)$ is bounded in $\dot L^p_\alpha$ for all $\alpha \in (0,1)$, without other assumption than \eqref{UE}. Let us mention that the result remains true for $\alpha \geq 1$. 

\begin{proposition} \label{prop:errorterms}
Let $p \in (1,\infty)$, $\alpha \in (0,1)$ and $g \in L^\infty(M,\mu)$. Then $\Pi_g$ is well-defined on $\mathcal{S}^p$ with for every $f \in \mathcal{S}^p$
$$
		\|\Pi_g(f)\|_{p,\alpha} \lesssim \|f\|_{p,\alpha} \|g\|_\infty.
$$
\end{proposition}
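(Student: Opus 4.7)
The plan is to move the fractional power $L^{\alpha/2}$ onto the outer $P_t$ inside the defining integral of $\Pi_g(f)$, apply Lemma~\ref{lem:orthogonality} to convert the resulting integral into a vertical square function bound, extract $\|g\|_\infty$ via the uniform estimate $\|P_tg\|_\infty\le\|g\|_\infty$, and finally identify the remaining square function in $Q_tf$ with $\|L^{\alpha/2}f\|_p = \|f\|_{p,\alpha}$ through the $H^\infty$-functional calculus of $L$. In particular, neither the gradient bound \eqref{Gp} nor the carr\'e du champ identity is needed here; this is what makes the paraproducts $\Pi_g(f)$ and $\Pi_f(g)$ \emph{uncritical} in the product decomposition, in contrast to the resonant term $\Pi(f,g)$.

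Concretely, for $f\in\mathcal{S}^p$ (so that $Lf\in L^p$ and $f = Lh$ for some $h\in L^p$), the definition $Q_t^{(D)} = c_D^{-1}(tL)^D e^{-tL}$ yields $\|Q_tf\|_p\lesssim t\|Lf\|_p$ for $t\le 1$ (rewriting $Q_tf = c_D^{-1}(tL)^{D-1}e^{-tL}(tLf)$) and $\|Q_tf\|_p\lesssim t^{-1}\|h\|_p$ for $t>1$ (rewriting $Q_tf = c_D^{-1}t^{-1}(tL)^{D+1}e^{-tL}h$); combined with the uniform $L^p$-boundedness of $P_t$ and with $\|P_tg\|_\infty\le\|g\|_\infty$, this guarantees absolute $L^p$-convergence of the defining integral of $\Pi_g(f)$. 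Next, using that $L^{\alpha/2}P_t = t^{-\alpha/2}(tL)^{\alpha/2}P_t^{(D)}$, I would write
$$L^{\alpha/2}\Pi_g(f) = \int_0^\infty (tL)^{\alpha/2}P_t^{(D)}\bigl[t^{-\alpha/2}(Q_tf\cdot P_tg)\bigr]\,\frac{dt}{t},$$
apply Lemma~\ref{lem:orthogonality} with exponent $\alpha/2$ in place of $\alpha$ and $F_t = t^{-\alpha/2}(Q_tf\cdot P_tg)$, and extract $\|g\|_\infty$ to obtain
$$\|L^{\alpha/2}\Pi_g(f)\|_p \lesssim \|g\|_\infty \left\|\left(\int_0^\infty t^{-\alpha}|Q_tf|^2\,\frac{dt}{t}\right)^{1/2}\right\|_p.$$
To close, the identity $t^{-\alpha/2}Q_t^{(D)}f = c_D^{-1}(tL)^{D-\alpha/2}e^{-tL}L^{\alpha/2}f$ together with the standard vertical square function bound for $(tL)^{D-\alpha/2}e^{-tL}$ (a direct analogue of Lemma~\ref{lem:verticalsqfct} for real exponents $>0$, valid under \eqref{UE}) applied to $L^{\alpha/2}f$ yields the desired estimate.

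The only genuinely delicate point, and hence the principal obstacle, is ensuring that $L^{\alpha/2}f$ is a well-defined element of $L^p$ so that the last square function estimate is meaningful; this is precisely the role of the test class $\mathcal{S}^p$, where the representation $f = Lh$ with $h\in L^p$ allows one to realize $L^{\alpha/2}f = L^{1+\alpha/2}h$ via the $H^\infty$-calculus of $L$. Beyond this bookkeeping the argument is entirely mechanical, the key algebraic coincidence being that the factor $t^{-\alpha/2}$ produced by splitting the outer $L^{\alpha/2}$ pairs exactly with the $t^{-\alpha/2}$ needed to extract an $L^{\alpha/2}$ from inside $Q_t^{(D)}$, so that fractional powers match cleanly and no regularity on $\Gamma$ is consumed.
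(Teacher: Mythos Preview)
Your proposal is correct and follows essentially the same route as the paper's (referenced) proof from \cite[Proposition~3.3]{BCF2}: push $L^{\alpha/2}$ onto the outer $P_t$, invoke Lemma~\ref{lem:orthogonality}, peel off $\|g\|_\infty$ via the uniform $L^\infty$-boundedness of $P_t$, and close with the vertical square function estimate of Lemma~\ref{lem:verticalsqfct} applied to $L^{\alpha/2}f$. Two cosmetic remarks: write $\|P_tg\|_\infty\lesssim\|g\|_\infty$ rather than $\le$ (under \eqref{UE} the operator $P_t^{(D)}$ is uniformly bounded on $L^\infty$ but need not be a contraction), and note that Lemma~\ref{lem:verticalsqfct} already allows arbitrary real $\alpha>0$, so no separate ``analogue'' is required---with $N=1$ one has $P_t^{(1)}=e^{-tL}$ and the lemma applies directly with exponent $D-\alpha/2$.
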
 


Let us now have a look at the resonant
term $\Pi(f,g)$. We use the assumed carr\'e du champ identity \eqref{carre} to write, with $\tilde Q_t := (tL)^{-1} Q_t$,
\begin{align} \label{eq:decomp-carre} \nonumber 
		\Pi(f,g) 
		= & \int_0^\infty (tL)^{-1}Q_t tL(P_t f \cdot P_t g) \,\frac{dt}{t}  \\ \nonumber 
		 = & \int_0^\infty \tilde  Q_t(tLP_t f \cdot P_t g) \,\frac{dt}{t} 
		+ \int_0^\infty \tilde Q_t (P_tf \cdot tLP_t g) \,\frac{dt}{t} \\ 
		&- 2 \int_0^\infty \tilde Q_t\Gamma \big( \sqrt{t} P_t f , \sqrt{t}  P_t g\big) \,\frac{dt}{t}. 
\end{align}

For the first term one can use the same arguments as for $\Pi_g(f)$ to show that for $p \in (1,\infty)$, $g \in L^\infty(M,\mu)$ and $f \in \mathcal{S}^p$, the integral converges absolutely in $L^p(M,\mu)$. By interchanging the roles of $f$ and $g$, the same holds true for the second term. In the third term, for every $0<\eps<R<\infty$, the finite integral $\int_\eps^R$ is well-defined. The results of Proposition \ref{prop:lp-para} and Proposition \ref{prop:sp-para} below in particular imply that the integral converges absolutely in $L^p(M,\mu)$. \\


Instead of showing the boundedness of $\Pi(f,g)$ in $\dot L^p_\alpha$ directly, we first show its boundedness in $L^q(M,\mu)$ for large $q<\infty$, and then interpolate with $\dot L^{p_0}_1$, where $p_0$ is chosen such that $(G_{p_0})$ holds.  \\

With the same arguments as in the proof of Proposition \ref{prop:errorterms}, one immediately obtains the $L^p$ boundedness of the first term in \eqref{eq:decomp-carre}. See the proof of \cite[Proposition 3.3]{BCF2}.

\begin{lemma} \label{lem:lp-para1}
Assume \eqref{UE}. Let $p \in (1,\infty)$.  Then for every $f \in L^p(M,\mu)$ and every $g \in L^\infty(M,\mu)$, we have
$$
		\left\| \int_0^\infty \tilde  Q_t(tLP_t f \cdot P_t g) \,\frac{dt}{t} \right\|_p 
		\lesssim \|f\|_p \|g\|_\infty.
$$
\end{lemma}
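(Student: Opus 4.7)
The plan is to mirror the proof of Proposition \ref{prop:errorterms} (equivalently \cite[Proposition 3.3]{BCF2}): apply the orthogonality Lemma \ref{lem:orthogonality} to replace the $t$-integral involving $\tilde Q_t$ by a vertical square function, then extract the $L^\infty$-norm of $g$ pointwise, and finally close using the vertical square function estimate of Lemma \ref{lem:verticalsqfct}.

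The preliminary observation is that $\tilde Q_t$ is, up to a multiplicative constant, of the exact form $(tL)^\alpha P_t^{(N)}$ required by Lemma \ref{lem:orthogonality}. Indeed $c_1=\int_0^\infty e^{-s}\,ds=1$, so Definition \ref{def:Qt-Pt} gives $\phi_1(x)=\int_x^\infty e^{-s}\,ds=e^{-x}$ and hence $P_t^{(1)}=e^{-tL}$. Therefore
$$\tilde Q_t \,=\, (tL)^{-1}Q_t^{(D)} \,=\, c_D^{-1}(tL)^{D-1}e^{-tL} \,=\, c_D^{-1}(tL)^{D-1}P_t^{(1)}.$$
Since $D-1>0$, I will apply Lemma \ref{lem:orthogonality} with $\alpha=D-1$, $N=1$, and $F_t:=(tLP_tf)(P_tg)$, which yields
$$\left\| \int_0^\infty \tilde Q_t(tLP_t f\cdot P_t g)\,\frac{dt}{t} \right\|_p \lesssim \left\|\left(\int_0^\infty |tLP_t f \cdot P_t g|^2\,\frac{dt}{t}\right)^{1/2}\right\|_p.$$

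Next, I will pull the $L^\infty$-norm of $g$ out pointwise: as recalled at the beginning of Section 3, $P_t=P_t^{(D)}$ is uniformly bounded on $L^\infty(M,\mu)$ (a consequence of the Gaussian kernel bound \eqref{UE}), so $|P_t g|\leq C\|g\|_\infty$ pointwise, uniformly in $t>0$. This reduces the problem to controlling
$$\left\|\left(\int_0^\infty |(tL)P_t^{(D)} f|^2 \,\frac{dt}{t}\right)^{1/2}\right\|_p,$$
which is exactly the content of Lemma \ref{lem:verticalsqfct} applied with $\alpha=1$ and $N=D$, giving a bound by $\|f\|_p$. Combining the three steps yields the claim.

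There is no genuine obstacle: the whole argument amounts to inserting the correct parameters into Lemmas \ref{lem:orthogonality} and \ref{lem:verticalsqfct}. The only bookkeeping step worth highlighting is the identification $\tilde Q_t=c_D^{-1}(tL)^{D-1}P_t^{(1)}$, made possible by the coincidence $P_t^{(1)}=e^{-tL}$, without which Lemma \ref{lem:orthogonality} could not be used off the shelf.
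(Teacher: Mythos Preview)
Your proof is correct and follows exactly the approach the paper intends: it refers the reader to the proof of Proposition \ref{prop:errorterms} (i.e.\ \cite[Proposition 3.3]{BCF2}), which is precisely the sequence Lemma \ref{lem:orthogonality} $\to$ pointwise bound $\|P_tg\|_\infty\lesssim\|g\|_\infty$ $\to$ Lemma \ref{lem:verticalsqfct} that you have spelled out. Your identification $\tilde Q_t=c_D^{-1}(tL)^{D-1}P_t^{(1)}$ is the right way to make Lemma \ref{lem:orthogonality} directly applicable.
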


For the second term, we obviously obtain the symmetric result in $f$ and $g$. But it is also possible to interchange the roles of $f$ and $g$.

\begin{lemma} \label{lem:lp-para2}
Assume \eqref{UE}. Let $p \in (1,\infty)$.  Then for every $g \in L^p(M,\mu)$ and every $f \in L^\infty(M,\mu)$, we have
$$
		\left\| \int_0^\infty \tilde  Q_t(tLP_t f \cdot P_t g) \,\frac{dt}{t} \right\|_p 
		\lesssim \|f\|_\infty \|g\|_p.
$$
\end{lemma}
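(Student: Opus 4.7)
My plan is to adapt the orthogonality argument used for Lemma~\ref{lem:lp-para1}. The obstacle is that, with $g$ now only in $L^p$ rather than $L^\infty$, one can no longer pull the $L^\infty$ factor out of the vertical square function; the vertical square function $(\int_0^\infty |P_tg|^2\,dt/t)^{1/2}$ is not $L^p$-bounded, since $P_tg$ carries no cancellation in $t$. The fix will be to compensate for this missing cancellation by exploiting the fact that $f\in L^\infty$ produces a Carleson measure on the $f$-side of the integrand.

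First, I observe that $\tilde Q_t = c_D^{-1}(tL)^{D-1}e^{-tL} = c_D^{-1}(tL)^{D-1}P_t^{(1)}$, since $\phi_1(x)=e^{-x}$. Lemma~\ref{lem:orthogonality}, applied with cancellation parameter $\alpha=D-1>0$ and $N=1$, therefore reduces the problem to the estimate
$$\Big\|\Big(\int_0^\infty |tLP_tf|^2\,|P_tg|^2\,\frac{dt}{t}\Big)^{1/2}\Big\|_p \lesssim \|f\|_\infty\,\|g\|_p.$$

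Next, I would establish that the measure $d\nu_f(x,t):=|tLP_tf(x)|^2\,d\mu(x)\,dt/t$ is a Carleson measure on $M\times(0,\infty)$ of norm $\lesssim\|f\|_\infty^2$. Writing $tL\phi_D(tL)$ as a finite linear combination $\sum_{k=1}^D c_kQ_t^{(k)}$, this reduces to the classical two-part argument for a ball $B$ of radius $r$: split $f=f\chi_{4B}+f\chi_{M\setminus 4B}$, bound the local part via the $L^2$ vertical square function estimate for $Q_t^{(k)}$ (a direct consequence of \eqref{UE}) together with the doubling property, and control the far part by the Davies--Gaffney / Gaussian off-diagonal decay of $(tL)^ke^{-tL}$ between $B$ and $M\setminus 4B$ (available thanks to \eqref{UE} and \eqref{eq:DG}). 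Each contribution is $\lesssim \|f\|_\infty^2\mu(B)$.

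Finally, I would invoke the $L^p$-form of the Carleson embedding theorem (equivalent to a Coifman--Meyer--Stein tent-space pairing): for any Carleson measure $\nu$ on $M\times(0,\infty)$ and any measurable $F$ with non-tangential maximal function $N(F)$,
$$\Big\|\Big(\int_0^\infty |F(\cdot,t)|^2\,\frac{d\nu}{d\mu}\Big)^{1/2}\Big\|_p \lesssim \|\nu\|_{\mathrm{Car}}^{1/2}\,\|N(F)\|_p.$$
Applied to $F(x,t)=P_tg(x)$, together with the pointwise control $N(P_\cdot g)\lesssim\mathcal{M}g$ by a Hardy--Littlewood type maximal operator (itself $L^p$-bounded via \eqref{UE}), this yields the target bound, which then extends by density to all $g\in L^p$ and $f\in L^\infty$. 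The hard part will be the clean verification of the Carleson property of $\nu_f$ and of the $L^p$-Carleson embedding in the present doubling metric-measure setting without any circular appeal to square-function bounds not yet at our disposal; both are classical in spirit but rely essentially on \eqref{UE} and \eqref{eq:DG} extended to the family $((tL)^ke^{-tL})_{t>0}$.
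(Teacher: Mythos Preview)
Your proposal is correct and follows essentially the same route as the paper's proof: both reduce via Lemma~\ref{lem:orthogonality} to the square function $\bigl\|\bigl(\int_0^\infty |tLP_tf|^2\,|P_tg|^2\,dt/t\bigr)^{1/2}\bigr\|_p$, and then control it by a Carleson/tent-space pairing, with the $f$-side supplying a Carleson-type bound (the paper phrases this as $\|\scrC_q(tLP_tf)\|_\infty\lesssim\|f\|_\infty$, citing \cite[Theorem~2.17 and Lemma~4.4]{BCF2}) and the $g$-side controlled by the non-tangential maximal function $\|N_\ast(P_\cdot g)\|_p\lesssim\|g\|_p$. The only cosmetic difference is that the paper outsources the Carleson embedding and the Carleson-measure verification to \cite{BCF2}, whereas you spell out the standard local/far-away decomposition explicitly.
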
 

A result of this kind was  already proven in \cite[Theorem 4.2]{F}. For convenience of the reader we give a (different) proof here.

\begin{proof}
By  Lemma \ref{lem:orthogonality} applied to $T_t=\tilde Q_t$ and \cite[Theorem 2.17]{BCF2}, we have for every $q \in (p,\infty)$ - with the notation as in \cite{BCF2} -
\begin{align*}
	& \left\|\int_0^\infty \tilde Q_t(tLP_t f \cdot P_t g) \, \frac{dt}{t} \right\|_p
	\lesssim \left\| \left( \int_0^\infty |tLP_t f \cdot P_t g|^2 \,\frac{dt}{t} \right)^{1/2} \right\|_p \\
	& \lesssim \| N_\ast (P_t g)\|_p  \|\scrC_{q} (tL P_tf)\|_\infty . 
\end{align*}
We let the reader check that a simple adaptation of \cite[Lemma 4.4 (a)]{BCF2} yields $\|N_\ast (P_tg)\|_p \lesssim \|g\|_p$. Similarly, one can modify the proof of \cite[Lemma 4.4 (b)]{BCF2} for the second estimate. To do so, note that by our assumptions, 
$$
		\left\|\left(\int_0^\infty |tLP_tf|^2 \,\frac{dt}{t}\right)^{1/2} \right\|_q 
		\lesssim \|f\|_q,
$$ 
and that $(tLP_t)_{t>0}$ satisfies $L^q$ off-diagonal estimates of any order. Using this, one obtains  $\|\scrC_{q} (tL P_tf)\|_\infty \lesssim \|f\|_\infty$. 
\end{proof}

In order to treat the third term in \eqref{eq:decomp-carre}, we define the operator $\Pi_\Gamma$ on $\mathcal{S}$ by 
$$
	\Pi_\Gamma(f,g) := \int_0^\infty \tilde Q_t \Gamma \big(\sqrt{t} P_t f , \sqrt{t}  P_t g\big) \,\frac{dt}{t}, \qquad f,g \in \mathcal{S}.
$$

\begin{proposition} \label{prop:lp-para}
Assume \eqref{UE}. Let $p \in (2,\infty)$, and let $g \in L^\infty(M,\mu)$. Then $\Pi_\Gamma(\,.\,,g)$ is well-defined on $L^p(M,\mu)$ with for every $f \in L^p(M,\mu)$
$$
	\|\Pi_\Gamma(f,g) \|_p \lesssim \|f\|_p \|g\|_\infty.
$$
\end{proposition}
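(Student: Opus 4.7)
The plan is to combine semigroup $L^2$-orthogonality, the pointwise Cauchy--Schwarz inequality \eqref{eq:gamma}, and the fact that $L^\infty$ data generate a Carleson measure through $\sqrt{t}\Gamma P_t$, which is where the assumption $g \in L^\infty$ enters.

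First, write $\tilde Q_t = c_D^{-1}(tL)^{D-1}e^{-tL}$, which fits the scope of Lemma~\ref{lem:orthogonality} (as used already in the proof of Lemma~\ref{lem:lp-para2}). This gives
$$
  \|\Pi_\Gamma(f,g)\|_p
  \lesssim \left\| \left(\int_0^\infty |\Gamma(\sqrt{t}P_tf,\sqrt{t}P_tg)|^2 \,\tfrac{dt}{t}\right)^{1/2}\right\|_p.
$$
Applying \eqref{eq:gamma} pointwise, this is at most $\|(\int_0^\infty \Gamma(\sqrt{t}P_tf)^2 \, \Gamma(\sqrt{t}P_tg)^2 \,\tfrac{dt}{t})^{1/2}\|_p$. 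Introduce the measure $d\mu_g(x,t):=\Gamma(\sqrt{t}P_tg)(x)^2\,d\mu(x)\,\tfrac{dt}{t}$ on $M\times(0,\infty)$; the task reduces to estimating $\|(\int F_t^2\,d\mu_g/d\mu)^{1/2}\|_p$ with $F_t=\Gamma(\sqrt{t}P_tf)$.

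Next I would show that $d\mu_g$ is a Carleson measure with $\|d\mu_g\|_{\mathcal C}\lesssim \|g\|_\infty^2$. For any ball $B$ of radius $r$, decompose $g = g\mathds{1}_{4B}+\sum_{k\geq 2}g\mathds{1}_{C_k(B)}$ where $C_k(B)=2^{k+1}B\setminus 2^k B$; \eqref{eq:DG} applied to $\sqrt{s}\Gamma P_s$ gives $L^2$-$L^2$ off-diagonal bounds with Gaussian decay, and a standard summation in $k$ and integration in $s\in(0,r^2)$ against the $L^\infty$ bound on $g$ produces the Carleson estimate. Combining this Carleson property with the tent-space Carleson embedding (a version of \cite[Theorem 2.17]{BCF2}), valid for $p>2$, yields
$$
  \left\|\left(\int_0^\infty F_t^2 \,d\mu_g/d\mu\right)^{1/2}\right\|_p
  \lesssim \|d\mu_g\|_{\mathcal C}^{1/2}\,\|N_*^{(2)}(F_t)\|_p,
$$
where $N_*^{(2)}$ denotes the $L^2$-averaged non-tangential maximal function.

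Finally, using once more the $L^2$ off-diagonal estimates on $\sqrt{t}\Gamma P_t$ provided by \eqref{eq:DG}, a standard annular decomposition of $f$ about $x$ yields the pointwise bound $N_*^{(2)}(\sqrt{t}\Gamma P_t f)(x)\lesssim \mathcal M(|f|^2)(x)^{1/2}$, where $\mathcal M$ is the Hardy--Littlewood maximal function; for $p>2$ one has $p/2>1$, so
$$
  \|N_*^{(2)}(\sqrt{t}\Gamma P_t f)\|_p
  \lesssim \|\mathcal M(|f|^2)\|_{p/2}^{1/2}
  \lesssim \|f\|_p,
$$
and the proposition follows. The restriction $p>2$ is used only at this last step, in order to linearise $\mathcal M$ on $|f|^2$. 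The main technical obstacle is the Carleson step: \eqref{eq:DG} provides only $L^2$-$L^2$ (not $L^\infty$) bounds on $\sqrt{t}\Gamma P_t$, so care is needed in the annular decomposition to sum over scales and annuli without losing the $\|g\|_\infty$ dependence.
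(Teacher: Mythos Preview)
Your overall strategy is natural, but Step~5 --- the ``vertical'' Carleson embedding
\[
\left\|\left(\int_0^\infty F_t^2\,G_t^2\,\tfrac{dt}{t}\right)^{1/2}\right\|_p
\lesssim \|d\mu_g\|_{\mathcal C}^{1/2}\,\|N_*^{(2)}(F_t)\|_p
\qquad (p>2)
\]
does not follow from \cite[Theorem~2.17]{BCF2} and is in fact false in general. The result used in the proof of Lemma~\ref{lem:lp-para2} has the form
$\|(\int|A_tB_t|^2\,\tfrac{dt}{t})^{1/2}\|_p \lesssim \|N_*(A)\|_p\,\|\scrC_q(B)\|_\infty$ with the \emph{pointwise} non-tangential maximal function and a $\scrC_q$-Carleson condition for some $q>p$. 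In your situation both factors are $\sqrt t\,\Gamma P_t(\cdot)$, for which only $L^2$--$L^2$ off-diagonal bounds are available under \eqref{UE} and \eqref{eq:DG}; you can obtain neither $N_*$ control (that would need $(G_\infty)$) nor $\scrC_q$ control with $q>p>2$ (that would need $(G_q)$). Worse, the $L^2$-Carleson condition alone cannot control the vertical norm for $p>2$: take $M=\R$, $F_t\equiv 1$, and $G_t(x)=\psi(x/\sqrt t)$ with a bump $\psi$ satisfying $\psi(0)=1$. Then $\int_0^{r^2}\!\int_B G_t^2\,dx\,\tfrac{dt}{t}\le \|\psi\|_2^2\,|B|$, so $d\mu_g$ is Carleson, and $N_*^{(2)}(F)\equiv 1$; yet $\int_0^\infty G_t(x)^2\,\tfrac{dt}{t}=2\int_0^\infty \psi(v)^2\,\tfrac{dv}{v}=\infty$ for every $x$, so the left-hand side is infinite.

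The missing idea is the spatial averaging that the paper supplies by splitting $\tilde Q_t=\tilde{\tilde Q}_t\,P_{t/2}^{(1)}$ \emph{before} applying Lemma~\ref{lem:orthogonality}. The retained factor $P_{t/2}^{(1)}$ has a pointwise Gaussian kernel by \eqref{UE}, so after Minkowski over annuli one lands on quantities of the form $\aver{B(x,2^j\sqrt t)}|\sqrt t\,\Gamma P_t f|\cdot|\sqrt t\,\Gamma P_t g|$. Cauchy--Schwarz then produces \emph{local $L^2$ averages}: the $g$-average is bounded by $\|g\|_\infty$ using only \eqref{eq:DG} (this is the precise analogue of your Carleson step, but done scale by scale), while the $f$-part is exactly the conical square function, i.e.\ a tent-space norm $\|\sqrt t\,\Gamma P_t f\|_{T^{p,2}_{2^j}}$. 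Change of angle and the conical square function bound for $p\ge 2$ (from \cite{AHM}) finish the proof. In short, your Carleson/maximal-function picture is morally right, but it only becomes rigorous once a semigroup factor is kept to convert the vertical square function into a conical one; without it, Step~5 fails.
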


\begin{proof}
We can write $\tilde Q_t = (tL)^{-1} Q_t^{(D)} = [c_D^{-1}(tL)^{D-1} e^{-t/2L}]e^{-t/2L}=: \tilde{\tilde{Q}}_t P_{t/2}^{(1)}$. By Lemma \ref{lem:orthogonality} with $T_t= \tilde{\tilde{Q}}_t$ in the first step,  Minkowski's inequality in the second, \eqref{UE} and \eqref{eq:gamma} in the third, and the Cauchy-Schwarz inequality in the last step, we obtain
\begin{align} \label{prop:Lpbdd} \nonumber 
	\|\Pi_\Gamma(f,g)\|_p
	&\lesssim \left\| \left(\int_0^\infty \Big| P_{t/2}^{(1)}\Gamma \big(\sqrt{t} P_t f , \sqrt{t} P_t g\big)\Big|^2 \,\frac{dt}{t}\right)^{1/2} \right\|_p\\ \nonumber 
	& \lesssim \sum_{j=0}^\infty \left\|x \mapsto \left(\int_0^\infty \Big|P_{t/2}^{(1)}\Eins_{S_j(B(x,\sqrt{t}))} \Gamma\big(\sqrt{t} P_t f , \sqrt{t}  P_t g \big) \Big|^2 \,\frac{dt}{t}\right)^{1/2} \right\|_p\\ \nonumber 
	& \lesssim \sum_{j=0}^\infty 2^{-2jN} 2^{j \nu}  \left\| x \mapsto\left(\int_0^\infty \left(\aver{B(x,2^j\sqrt{t})} |\sqrt{t}\Gamma P_t f| \cdot |\sqrt{t} \Gamma P_t g|\,d\mu\right)^2\,\frac{dt}{t}\right)^{1/2} \right\|_p\\
	& \lesssim \sum_{j=0}^\infty 2^{-2jN} 2^{j\nu}  \left\|x \mapsto \left(\int_0^\infty \left(\aver{B(x,2^j\sqrt{t})} |\sqrt{t}\Gamma P_t f|^2 \,d\mu \right) \left(\aver{B(x,2^j\sqrt{t})} |\sqrt{t} \Gamma P_t g|^2\,d\mu\right)\,\frac{dt}{t}\right)^{1/2} \right\|_p.
\end{align}
For all $j \geq 0$ and $x \in M$, $L^2$ off-diagonal estimates for $(\sqrt{t}\Gamma P_t)_{t>0}$ (see \eqref{eq:DG})  yield
\begin{align} \label{prop:Lpbdd-eq2} \nonumber 
	\left(\aver{B(x,2^j\sqrt{t})} |\sqrt{t}\Gamma P_t g|^2\,d\mu\right)^{1/2}
	&\leq \sum_{k=0}^\infty \left(\aver{B(x,2^j\sqrt{t})} |\sqrt{t}\Gamma P_t (\Eins_{S_k(B(x,2^j\sqrt{t}))} g)|^2\,d\mu\right)^{1/2}\\ \nonumber
	& \hspace{-3cm} \lesssim  \left(\aver{B(x,2^{j}\sqrt{t})} |g|^2\,d\mu\right)^{1/2} + \sum_{k=1}^\infty \left(1+\frac{(2^{j+k}\sqrt{t})^2}{t}\right)^{-N} 2^{k\nu /2} \left(\aver{B(x,2^{j+k}\sqrt{t})} |g|^2\,d\mu\right)^{1/2}\\
	& \hspace{-3cm} \lesssim \|g\|_\infty. 
\end{align}
Using this estimate in \eqref{prop:Lpbdd}, we get
\begin{align*}
	\|\Pi_\Gamma(f,g)\|_p
	&\lesssim \|g\|_\infty \sum_{j=0}^\infty 2^{-2jN} 2^{j \nu}  \left\|x \mapsto \left(\int_0^\infty \aver{B(x,2^j\sqrt{t})} |\sqrt{t}\Gamma P_t f|^2 \,d\mu \,\frac{dt}{t}\right)^{1/2} \right\|_p\\
	& = \|g\|_\infty \sum_{j=0}^\infty 2^{-2jN} 2^{j\nu} \|\sqrt{t}\Gamma P_t f\|_{T^{p,2}_{2^j}(M)},
\end{align*}
where $T^{p,2}_{2^j}(M)$ denotes the tent space with angle $2^j$ and appropriate elliptic scaling. By change of angle in tent spaces \cite[Theorem 1.1]{angle}, $\|\sqrt{t}\Gamma P_t f\|_{T^{p,2}_{2^j}(M)} \lesssim 2^{j\nu /2} \|\sqrt{t}\Gamma P_t f\|_{T^{p,2}(M)}$ for all $p\geq 2$. On the other hand, it is known from e.g. \cite[Theorem 3.1]{AHM} (which extends to our setting) that $\sqrt{t}\Gamma P_t$ satisfies a conical square function estimate for $p \geq 2$. Thus, we finally obtain
\begin{align*}
	\|\Pi_\Gamma(f,g)\|_p 
	& \lesssim \|g\|_\infty \sum_{j=0}^\infty 2^{-2jN} 2^{j\nu} 2^{j\nu /2} \|\sqrt{t}\Gamma P_t f\|_{T^{p,2}(M)} \lesssim \|f\|_p   \|g\|_\infty.
\end{align*}
\end{proof}

Putting Lemma \ref{lem:lp-para1}, Lemma \ref{lem:lp-para2} and Proposition \ref{prop:lp-para} together, we obtain

\begin{coro}
Assume \eqref{UE}. Let $p \in (2,\infty)$, and let $g \in L^\infty(M,\mu)$. Then $\Pi(\,.\,,g)$ is well-defined on $L^p(M,\mu)$ with for every $f \in L^p(M,\mu)$
$$
		\|\Pi(f,g)\|_p \lesssim \|f\|_p \|g\|_\infty. 
$$
\end{coro}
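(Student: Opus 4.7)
The plan is to use the three-term splitting of the resonant paraproduct provided by the carré du champ identity in \eqref{eq:decomp-carre}, namely
\begin{align*}
\Pi(f,g) &= \int_0^\infty \tilde Q_t(tLP_t f \cdot P_t g)\,\frac{dt}{t} + \int_0^\infty \tilde Q_t(P_t f \cdot tLP_t g)\,\frac{dt}{t} \\
&\quad - 2\,\Pi_\Gamma(f,g),
\end{align*}
and to bound each piece separately by $\|f\|_p\|g\|_\infty$ using the three results already established just above: Lemma \ref{lem:lp-para1}, Lemma \ref{lem:lp-para2}, and Proposition \ref{prop:lp-para}. The corollary is therefore almost a direct consequence of these three results, the whole point of having both Lemma \ref{lem:lp-para1} and Lemma \ref{lem:lp-para2} being precisely to allow the $L^p$-norm to be placed on $f$ in both of the first two summands.

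First, Lemma \ref{lem:lp-para1} applied directly to the first term in the decomposition gives
$$\left\|\int_0^\infty \tilde Q_t(tLP_t f \cdot P_t g)\,\frac{dt}{t}\right\|_p \lesssim \|f\|_p\|g\|_\infty.$$
Next, for the second term I would swap the roles of the two variables: writing $\int_0^\infty \tilde Q_t(P_tf\cdot tLP_tg)\,\frac{dt}{t} = \int_0^\infty \tilde Q_t(tLP_tg\cdot P_tf)\,\frac{dt}{t}$ and applying Lemma \ref{lem:lp-para2} with $g$ in the role of the first argument and $f$ in the role of the second, I obtain
$$\left\|\int_0^\infty \tilde Q_t(P_t f \cdot tLP_t g)\,\frac{dt}{t}\right\|_p \lesssim \|g\|_\infty\|f\|_p.$$

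Finally, Proposition \ref{prop:lp-para} provides $\|\Pi_\Gamma(f,g)\|_p \lesssim \|f\|_p\|g\|_\infty$, and the triangle inequality applied to \eqref{eq:decomp-carre} then yields the claim. There is no real obstacle in this step; the genuine work was already carried out in Proposition \ref{prop:lp-para} (controlling the $\Gamma\Gamma$ product through the tent-space change of angle together with $L^2$ off-diagonal estimates on $\sqrt{t}\Gamma P_t$ and the conical square-function estimate for $p\ge 2$) and in the non-symmetric bound of Lemma \ref{lem:lp-para2} (where the $L^p$-norm is transferred from $f$ to $g$ via Carleson-type duality). Once these are in hand, the corollary is a book-keeping combination.
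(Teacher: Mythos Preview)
Your proof is correct and matches the paper's approach exactly: the corollary is stated right after the sentence ``Putting Lemma \ref{lem:lp-para1}, Lemma \ref{lem:lp-para2} and Proposition \ref{prop:lp-para} together, we obtain'', and your argument is precisely that combination via the decomposition \eqref{eq:decomp-carre}. Your identification of roles in Lemma \ref{lem:lp-para2} (placing the $L^\infty$ function $g$ in the $tLP_t$-slot and the $L^p$ function $f$ in the $P_t$-slot) is the intended one.
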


The above result provides us with the required result at one of the endpoints in the interpolation. Let us now have a look at the other endpoint. \\

One of the terms in \eqref{eq:decomp-carre} can be estimated in $\dot L^p_\alpha$ without further assumptions. The proof is the same as the one for Proposition \ref{prop:errorterms}. 

\begin{proposition} \label{prop:error-2}
Assume \eqref{UE}. Let $p \in (1,\infty)$, $\alpha \in (0,1)$ and $g \in L^\infty(M,\mu)$. Then the integral on the left-hand side is well-defined on $\mathcal{S}^p$ with for every $f \in \mathcal{S}^p$
$$
		\left\|\int_0^\infty \tilde  Q_t(tLP_t f \cdot P_t g) \,\frac{dt}{t}  \right\|_{p,\alpha} \lesssim \|f\|_{p,\alpha} \|g\|_\infty.
$$
\end{proposition}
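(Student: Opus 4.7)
The plan is to mimic the proof of Proposition \ref{prop:errorterms}, with the roles of $Q_t f$ and $P_t g$ there played instead by $tLP_t f$ and $P_t g$, absorbing the extra $(tL)^{-1}$ sitting inside $\tilde Q_t$ into a functional-calculus factor. To begin, I commute $L^{\alpha/2}$ under the integral and rewrite
\begin{equation*}
L^{\alpha/2} \tilde Q_t = c_D^{-1}\, t^{-\alpha/2}\, (tL)^{D-1+\alpha/2}\, e^{-tL} =: t^{-\alpha/2} R_t,
\end{equation*}
so that the quantity to estimate becomes $\int_0^\infty R_t(tLP_t f \cdot P_t g)\, t^{-\alpha/2}\,\frac{dt}{t}$. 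The operator $R_t$ is a Calder\'on-type spectral multiplier of $tL$ sharing the vanishing at $0$ and $\infty$ that makes $Q_t^{(D)}$ amenable to square function and orthogonality estimates.

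Next I invoke (a mild extension of) Lemma \ref{lem:orthogonality}, applied to $R_t$ in place of $(tL)^\alpha P_t^{(N)}$, to dominate
\begin{equation*}
\left\| \int_0^\infty R_t(tLP_t f \cdot P_t g) \,t^{-\alpha/2}\,\frac{dt}{t} \right\|_p
\lesssim \left\| \left( \int_0^\infty \bigl|t^{-\alpha/2}\, tL P_t f \cdot P_t g\bigr|^2 \,\frac{dt}{t}\right)^{1/2}\right\|_p.
\end{equation*}
Since \eqref{UE} gives uniform boundedness of $P_t$ on $L^\infty(M,\mu)$, I bound $|P_t g| \lesssim \|g\|_\infty$ pointwise and pull the factor out.

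What remains is to control the square function in $f$. I rewrite, via the functional calculus,
\begin{equation*}
t^{-\alpha/2}\, tL P_t f = (tL)^{1-\alpha/2} P_t\, L^{\alpha/2} f,
\end{equation*}
and since $\alpha \in (0,1)$ gives $1-\alpha/2 > 0$, Lemma \ref{lem:verticalsqfct} (applied to $F := L^{\alpha/2}f$ with the exponent $1-\alpha/2$) yields
\begin{equation*}
\left\|\left(\int_0^\infty |(tL)^{1-\alpha/2} P_t L^{\alpha/2} f|^2 \,\frac{dt}{t}\right)^{1/2}\right\|_p
\lesssim \|L^{\alpha/2} f\|_p = \|f\|_{p,\alpha},
\end{equation*}
which closes the estimate.

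The only minor obstacle is that Lemma \ref{lem:orthogonality} is stated for operators of the form $(tL)^\alpha P_t^{(N)}$, whereas $R_t = c_D^{-1}(tL)^{D-1+\alpha/2}e^{-tL}$ has its non-integer power of $tL$ sitting next to an exponential rather than a $\phi_N(tL)$. However, the proof of that lemma depends only on a vertical square function bound and $L^2$ Davies--Gaffney estimates, both of which hold for $R_t$ by the same heat-kernel and functional-calculus arguments (one can even split $e^{-tL}=e^{-tL/2}e^{-tL/2}$ to rewrite $R_t$ as a composition of a $Q$-type operator and an analytic semigroup factor). So this generalisation is purely bookkeeping.
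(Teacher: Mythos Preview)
Your proof is correct and follows essentially the same approach as the paper, which simply remarks that the argument is identical to that of Proposition~\ref{prop:errorterms}. One small simplification: your ``minor obstacle'' is not even an obstacle, since $e^{-tL}=P_t^{(1)}$ (indeed $\phi_1(x)=e^{-x}$), so $R_t=c_D^{-1}(tL)^{D-1+\alpha/2}P_t^{(1)}$ is already literally of the form covered by Lemma~\ref{lem:orthogonality}.
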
 

The result for the resonant term can be obtained similarly to the one in Proposition \ref{prop:lp-para}, but requires the additional assumption of gradient bounds on the semigroup.

\begin{proposition} \label{prop:sp-para}
Assume \eqref{UE} and $(G_{p_0})$ for some $p_0 \in [2,\infty)$. Let $p \in (1,p_0)$, $\alpha \in (0,1)$ and $g \in L^\infty(M,\mu)$. Then the integral on the left-hand side is well-defined on $\mathcal{S}^p$ with for every $f \in \mathcal{S}^p$
$$
	\left\| \Pi_\Gamma(f,g) \right\|_{p,\alpha}	= \left\|\int_0^\infty  \tilde Q_t(\sqrt{t}\Gamma P_t f \cdot \sqrt{t} \Gamma P_t g) \,\frac{dt}{t} \right\|_{p,\alpha} \lesssim \|f\|_{p,\alpha} \|g\|_\infty.
$$
\end{proposition}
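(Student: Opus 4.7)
The strategy is to reduce this to (an enhanced version of) the argument used for Proposition \ref{prop:lp-para}, by first transferring the $\alpha/2$-regularity off of $\Pi_\Gamma(f,g)$ and onto $f$. Set $F:=L^{\alpha/2}f\in L^p(M,\mu)$, so that $\|F\|_p=\|f\|_{p,\alpha}$. Using the identities $L^{\alpha/2}\tilde Q_t = t^{-\alpha/2}(tL)^{\alpha/2-1}Q_t$ and $P_tf = L^{-\alpha/2}P_tF = t^{\alpha/2}(tL)^{-\alpha/2}P_tF$, together with the bilinearity of $\Gamma$, one obtains
\[
    L^{\alpha/2}\Pi_\Gamma(f,g)
    =\int_0^\infty \widetilde Q_t^\alpha\left[\Gamma\bigl(\sqrt{t}(tL)^{-\alpha/2}P_tF,\sqrt{t}P_tg\bigr)\right]\frac{dt}{t},
\]
where $\widetilde Q_t^\alpha := c_D^{-1}(tL)^{D+\alpha/2-1}e^{-tL}$. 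Since $D+\alpha/2-1>0$, this is still a $Q$-type operator: it vanishes appropriately at $0$ and decays exponentially at infinity, so the orthogonality Lemma \ref{lem:orthogonality} (applied after extracting the factor $e^{-t/2L}=c_1P_{t/2}^{(1)}$ on the right) and the vertical square function estimates of Lemma \ref{lem:verticalsqfct} apply to it verbatim.

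Next, I would follow the reduction from the proof of Proposition \ref{prop:lp-para} line by line. Splitting $\widetilde Q_t^\alpha = \widetilde{\widetilde Q}_t^\alpha P_{t/2}^{(1)}$ and invoking orthogonality gives
\[
    \|L^{\alpha/2}\Pi_\Gamma(f,g)\|_p
    \lesssim \left\|\left(\int_0^\infty\bigl|P_{t/2}^{(1)}\Gamma\bigl(\sqrt{t}(tL)^{-\alpha/2}P_tF,\sqrt{t}P_tg\bigr)\bigr|^2\frac{dt}{t}\right)^{1/2}\right\|_p.
\]
One then uses the Gaussian pointwise bound \eqref{UE} on the kernel of $P_{t/2}^{(1)}$ and decomposes into the annular shells $S_j(B(x,\sqrt{t}))$; inside each shell one applies $|\Gamma(u,v)|\le\Gamma(u)\Gamma(v)$ and Cauchy-Schwarz to decouple the two $\sqrt{t}\Gamma$-factors, exactly as in \eqref{prop:Lpbdd}. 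The $g$-factor is controlled uniformly in $L^\infty$ by $\|g\|_\infty$ via the $L^2$ off-diagonal estimates \eqref{eq:DG} for $\sqrt{t}\Gamma P_t$, using the very computation \eqref{prop:Lpbdd-eq2} of Proposition \ref{prop:lp-para}. What remains is a controllable geometric sum in $j$ times
\[
    \|\sqrt{t}\Gamma(tL)^{-\alpha/2}P_tF\|_{T^{p,2}_{2^j}(M)},
\]
with angle $2^j$ and elliptic scaling, where the tent-space change of angle \cite[Theorem 1.1]{angle} absorbs the factor $2^{jC}$ into the series (convergent thanks to the choice $D>4\nu$).

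Finally, one needs the square function estimate $\|\sqrt{t}\Gamma(tL)^{-\alpha/2}P_tF\|_{T^{p,2}(M)}\lesssim\|F\|_p$ for $p\in(1,p_0)$, which is precisely where the extra hypothesis $(G_{p_0})$ enters. For $p\in(1,2]$ the conical square function is dominated by the vertical one, so Lemma \ref{lem:g-fct} applies directly. For $p\in(2,p_0)$, the vertical bound of Lemma \ref{lem:g-fct} is weaker than what we need and one must establish the conical estimate in its own right, by combining $L^2$ off-diagonal bounds on $\sqrt{t}\Gamma(tL)^{-\alpha/2}P_t^{(N)}$ (which follow from composing the off-diagonal bounds of \eqref{eq:DG} with those of the fractional integral $(tL)^{-\alpha/2}P_t$) with the $L^{p_0}$ gradient bound $(G_{p_0})$, and extrapolating $\mathcal{A}^{(1)}_2$-boundedness to $L^p$ by a standard Calderón-Zygmund/tent space argument. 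This conical estimate for the range $p\in(2,p_0)$ is the main obstacle, as it is the only step where one really has to go beyond the tools already exploited in Proposition \ref{prop:lp-para}; once it is in place, collecting all estimates and summing the geometric series in $j$ yields $\|L^{\alpha/2}\Pi_\Gamma(f,g)\|_p\lesssim\|F\|_p\|g\|_\infty=\|f\|_{p,\alpha}\|g\|_\infty$, as desired.
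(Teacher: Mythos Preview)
Your overall strategy and the first part of the argument---transferring $L^{\alpha/2}$ onto the $Q$-type operator, the annular decomposition via \eqref{UE}, the decoupling of the two $\Gamma$-factors by Cauchy--Schwarz, the $L^\infty$ bound on the $g$-factor through \eqref{prop:Lpbdd-eq2}, and the change of angle---match the paper's proof essentially verbatim. The problem lies in your last paragraph, where you have the conical/vertical comparison exactly backwards.

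The general inequality (see \cite[Proposition 2.1, Remark 2.2]{AHM}) is that for $p\ge 2$ the conical square function is controlled by the vertical one, while for $p\le 2$ the inequality reverses. Hence for $p\in[2,p_0)$ the vertical estimate of Lemma~\ref{lem:g-fct} immediately yields the conical tent-space bound you need---no extra Calder\'on--Zygmund extrapolation is required; this is precisely what the paper does. Conversely, for $p\in(1,2)$ your claim ``conical is dominated by vertical'' is false, so Lemma~\ref{lem:g-fct} does \emph{not} apply directly; one must instead establish the conical bound on $\sqrt{t}\,\Gamma(tL)^{-\alpha/2}P_t$ in its own right. The paper handles this by invoking \cite[Proposition 6.8]{A}, whose proof adapts to the present setting under \eqref{UE} and \eqref{R2}. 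In short, swap your two cases: the range $p\in(1,2)$ is where genuine extra work beyond Lemma~\ref{lem:g-fct} is needed, and the range $p\in[2,p_0)$ is the easy one.
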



\begin{proof}
The proof is similar to the one of Proposition \ref{prop:lp-para}. We first use that by choosing $D$ in the definition of $Q_t=Q_t^{(D)}$ large enough, the operator $(tL)^{\alpha/2}Q_t$ satisfies $L^2$ off-diagonal estimates of order $N=N(D,\alpha)>\nu$. This allows to follow the steps in \eqref{prop:Lpbdd} and \eqref{prop:Lpbdd-eq2}. We obtain
\begin{align*}
	& \left\| L^{\alpha/2} \int_0^\infty Q_t (\sqrt{t}\Gamma P_t f \cdot \sqrt{t} \Gamma P_t g) \,\frac{dt}{t} \right\|_p\\
		& = \left\|(tL)^{\alpha/2} \int_0^\infty Q_t (t^{-\alpha/2} \sqrt{t}\Gamma P_t f \cdot \sqrt{t} \Gamma P_t g) \,\frac{dt}{t} \right\|_p\\
	& \lesssim \sum_{j=0}^\infty 2^{-2jN} 2^{j\nu} \left\|x \mapsto \left(\int_0^\infty \left(\aver{B(x,2^j \sqrt{t})} |t^{-\alpha/2} \sqrt{t} \Gamma P_t f|^2 \,d\mu \right) \left(\aver{B(x,2^j \sqrt{t})} |\sqrt{t}\Gamma P_t g|^2 \,d\mu\right) \,\frac{dt}{t}\right)^{1/2} \right\|_p \\
	& \lesssim  \|g\|_\infty \sum_{j=0}^\infty 2^{-2jN} 2^{j\nu} \left\| \sqrt{t} \Gamma (tL)^{-\alpha/2} P_t (L^{\alpha/2} f) \right\|_{T^{p,2}_{2^j}(M)}\\
	& \lesssim \|g\|_\infty  \| \sqrt{t} \Gamma (tL)^{-\alpha/2} P_t (L^{\alpha/2} f)\|_{T^{p,2}(M)},
\end{align*}	
where the last line follows from change of angle in tent spaces \cite[Theorem 1.1]{angle}. 
If $p \geq 2$, the above conical square function estimate is dominated by its vertical counterpart \cite[Proposition 2.1, Remark 2.2]{AHM}. Invoking Lemma \ref{lem:g-fct} for $p \in [2,p_0)$, we therefore have that the above is bounded by
\begin{align*}
	\|g\|_\infty  \| \sqrt{t} \Gamma (tL)^{-\alpha/2} P_t (L^{\alpha/2} f)\|_{L^p(M;L^2(\R_+;\frac{dt}{t}))}
	\lesssim \|g\|_\infty \|L^{\alpha/2} f\|_p. 
\end{align*}
If $p\in(1,2)$, we use \cite[Proposition 6.8]{A} (adapted to our current setting under \eqref{UE} and \eqref{R2}), to have the $L^p$-boundedness of the conical square function and we conclude to the same estimate.
\end{proof}


Stein's complex interpolation between the estimates in Proposition \ref{prop:lp-para} and Proposition \ref{prop:sp-para} on the endpoints $(\alpha,p)=(0,\infty)$ and $(\alpha,p)=(1,p_0)$ then yields

\begin{proposition} \label{prop:inter}
	Assume \eqref{UE} and $(G_{p_0})$ for some $p_0 \in [2,\infty)$. Let $p \in (p_0,\infty)$, $\alpha \in (0,\frac{p_0}{p})$ and $g \in L^\infty(M,\mu)$. Then for every  $f \in \dot L^p_\alpha(M)$, we have
$$
		\|\Pi_\Gamma(f,g)\|_{p,\alpha} \lesssim \|f\|_{p,\alpha} \|g\|_\infty.
$$
\end{proposition}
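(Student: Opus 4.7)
Following the hint in the statement, I would apply Stein's complex interpolation to the analytic family
$$T_z(f) := e^{(z-\theta)^2}\, L^{Az/2}\, \Pi_\Gamma\!\big(L^{-Az/2} f,\, g\big), \qquad 0 \leq \Re z \leq 1,$$
with $A \in (0,1)$ and $\theta \in (0,1)$ auxiliary parameters. The weight $e^{(z-\theta)^2}$ satisfies $|e^{(z-\theta)^2}| \leq e \cdot e^{-(\Im z)^2}$ on the strip, so it forces admissible (in fact Gaussian) decay on both vertical boundary lines while leaving $T_\theta$ untouched. Holomorphy on the open strip, together with norm continuity up to the boundary for $f$ in a dense subclass of $\mathcal{S}^{p_\theta}$, follows from the functional calculus of $L$ applied to test functions in $\mathcal{S}$.

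On $\Re z = 0$ the operator $L^{Az/2}$ reduces to an imaginary power $L^{iA\tau/2}$, bounded on every $L^P(M,\mu)$ with $P \in (1,\infty)$ with norm of polynomial (or at worst subexponential) growth in $|\tau|$; this is the standard consequence of the bounded $H^\infty$-calculus of $L$, itself a classical consequence of the Gaussian estimate \eqref{UE} together with the $\omega$-accretivity. Combined with Proposition \ref{prop:lp-para} this yields, for any fixed $P \in (2,\infty)$,
$$\|T_{i\tau}(f)\|_P \lesssim e^{-\tau^2/2}\, \|f\|_P\, \|g\|_\infty.$$
On $\Re z = 1$ I factor
$$T_{1+i\tau}(f) = e^{(1+i\tau-\theta)^2}\, L^{iA\tau/2} \Big[L^{A/2}\, \Pi_\Gamma\!\big(L^{-A/2}\, L^{-iA\tau/2} f,\, g\big)\Big],$$
and apply Proposition \ref{prop:sp-para} with $\alpha = A \in (0,1)$ and $p = Q \in (1,p_0)$ to the inner operator $h \mapsto L^{A/2}\Pi_\Gamma(L^{-A/2}h, g)$, absorbing the surrounding imaginary powers as before to obtain $\|T_{1+i\tau}(f)\|_Q \lesssim e^{-\tau^2/2}\, \|f\|_Q\, \|g\|_\infty$.

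Stein's theorem then produces $\|T_\theta(f)\|_{p_\theta} \lesssim \|f\|_{p_\theta} \|g\|_\infty$ with $1/p_\theta = (1-\theta)/P + \theta/Q$. Since $T_\theta(f) = L^{A\theta/2}\Pi_\Gamma(L^{-A\theta/2} f, g)$, the substitution $h = L^{A\theta/2} f$ converts this into the desired Sobolev inequality $\|\Pi_\Gamma(h,g)\|_{p_\theta,\, A\theta} \lesssim \|h\|_{p_\theta,\, A\theta}\|g\|_\infty$. To realize any target $(\alpha_*, p_*)$ with $p_* > p_0$ and $\alpha_* \in (0, p_0/p_*)$, I would pick $A \in (\alpha_* p_* / p_0,\, 1)$, set $\theta := \alpha_*/A \in (0,1)$, take $Q \in (\theta p_*,\, p_0)$ (nonempty since $\theta p_* < p_0$), and finally let $P$ be defined by the harmonic-mean relation; the limiting values $A \to 1$, $Q \to p_0$ give $1/P \to (1/p_* - \alpha_*/p_0)/(1-\alpha_*)$, which one checks is strictly less than $1/2$ throughout the open region, so small perturbations preserve $P > 2$.

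The main technical obstacle is the admissibility of the analytic family: holomorphy of $z \mapsto T_z(f)$ on the strip for test functions $f$, and subexponential growth of the imaginary powers $L^{iA\tau/2}$ on the relevant $L^P$ and $L^Q$. Both are standard consequences of the bounded $H^\infty$-calculus implied by \eqref{UE} (via the Duong--McIntosh kernel approach and subsequent works), but they should be invoked explicitly to certify the hypotheses of Stein's theorem; the Gaussian factor $e^{(z-\theta)^2}$ is then what makes any reasonable polynomial or subexponential boundary growth immediately admissible. Once these two points are in hand, the proof is essentially bookkeeping of the parameter constraints sketched above, followed by the routine density argument extending the estimate from $\mathcal{S}^{p_*}$ to all of $\dot L^{p_*}_{\alpha_*}(M)$.
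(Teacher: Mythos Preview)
Your proposal is correct and follows essentially the same route as the paper: Stein's complex interpolation of the family $z\mapsto L^{z/2}\Pi_\Gamma(L^{-z/2}\,\cdot\,,g)$ between the $L^P$ bound from Proposition~\ref{prop:lp-para} on $\Re z=0$ and the Sobolev bound from Proposition~\ref{prop:sp-para} on $\Re z=1$, using the $L^p$-boundedness of imaginary powers (guaranteed by \eqref{UE}) to control the boundary norms. The only cosmetic differences are that you insert an explicit Gaussian damping factor and select interior parameters $(A,Q,P)$ directly, whereas the paper invokes the polynomial-growth version of Stein's theorem and reaches the full open range by sending $\beta\to1$, $p_1\to\infty$.
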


\begin{proof}
We apply Stein's complex interpolation \cite{Stein}. Let $p_1\in (p_0,\infty)$, and $\beta \in (0,1)$. 
Fix $g \in L^\infty(M,\mu)$. Define for $z \in \C$ the operator 
$$
		T_g^z :=L^{z/2} \Pi_\Gamma (L^{-z/2} \,.\,,g). 
$$
Recall that under \eqref{UE}, imaginary powers of $L$ are bounded in $L^p$ for all $p \in (1,\infty)$ (see \cite[Proposition 2.1]{BCF2}), with bound
$$
		\|L^{i\eta}\|_{p \to p} \lesssim (1+|\eta|)^s,
$$
whenever $s>\frac{\nu}{2}$ and $\eta \in \R$. 
From Proposition \ref{prop:lp-para}, we know that $T_g^0=\Pi_\Gamma(\,.\,,g)$ is a bounded operator in $L^{p_1}$. We thus obtain
$$
		\sup_{\gamma \in \R} (1+|\gamma|)^{-s} \|T_g^{i\gamma}\|_{p_1 \to p_1} \leq C^0,
$$
with $s>\frac{\nu}{2}$. On the other hand, Proposition \ref{prop:sp-para} yields that $T_g^0=\Pi_\Gamma(\,.\,,g)$  is bounded on $\dot L^p_\beta$. Hence,
$$
		\sup_{\gamma \in \R} (1+|\gamma|)^{-s} \|T_g^{\beta+i\gamma}\|_{p_0 \to p_0} \leq C^1_{\beta}. 
$$
Stein's interpolation \cite[Theorem 1]{Stein} then yields that the operator  
$$
		L^{\alpha/2}\Pi_\Gamma(L^{-\alpha/2}\,.\,,g) : L^p \to L^p
$$
is bounded whenever $\alpha= \theta\beta$ and $\frac{1}{p}=\frac{\theta}{p_0}+\frac{1-\theta}{p_1}$. Taking the limit for $\beta \to 1$ and $p_1 \to \infty$ yields the result. 

\end{proof}

\end{document}